\allowdisplaybreaks \numberwithin{equation}{section}
\newtheorem{theorem}{Theorem}[section]
\newtheorem{proposition}[theorem]{Proposition}
\newtheorem{lemma}[theorem]{Lemma}
\newtheorem{corollary}[theorem]{Corollary}
\newtheorem{claim}[theorem]{Claim}
\theoremstyle{definition}
\newtheorem{example}[theorem]{Example}
\newtheorem{remark}[theorem]{Remark}
\newcommand{\ord}{\mathrm{ord}}
\newcommand{\Aut}{\mathrm{Aut}}
\newcommand{\Sing}{\mathrm{Sing}}
\newcommand{\Span}{\mathrm{Span}}
\begin{document}

\title[]{Subcanonical points on projective curves and triply periodic minimal surfaces in the Euclidean space}
\author{Francesco Bastianelli}
\address{Dipartimento di Matematica e Fisica, Universit\`a degli Studi Roma Tre, Largo San Leonardo Murialdo 1, 00146 Roma - Italy}
\email{bastiane@mat.uniroma3.it}
\thanks{This work was partially supported by INdAM (GNSAGA); PRIN 2012 \emph{``Moduli, strutture geometriche e loro applicazioni''}; FAR 2013 (PV) \emph{``Variet\`a algebriche, calcolo algebrico, grafi orientati e topologici''}; FIRB 2012 \emph{``Spazi di moduli e applicazioni''}.}
\author{Gian Pietro Pirola}
\address{Dipartimento di Matematica, Universit\`a degli Studi di Pavia, via Ferrata 1, 27100 Pavia - Italy}
\email{gianpietro.pirola@unipv.it}

\begin{abstract}
A point $p\in C$ on a smooth complex projective curve of genus $g\geq 3$ is subcanonical if the divisor $(2g-2)p$ is canonical.
The subcanonical locus $\mathcal{G}_g\subset \mathcal{M}_{g,1}$ described by pairs $(C,p)$ as above has dimension $2g-1$ and consists of three irreducible components.
Apart from the hyperelliptic component $\mathcal{G}_g^{\textrm{hyp}}$, the other components $\mathcal{G}_g^{\textrm{odd}}$ and $\mathcal{G}_g^{\textrm{even}}$ depend on the parity of $h^0(C,(g-1)p)$, and their general points satisfy $h^0(C,(g-1)p)=1$ and $2$, respectively.
In this paper, we study the subloci $\mathcal{G}_g^{r}$ of pairs $(C,p)$ in $\mathcal{G}_g$ such that $h^0(C,(g-1)p)\geq r+1$ and $h^0\left(C,(g-1)p\right)\equiv r+1\,(\textrm{mod}\,2)$.
In particular, we provide a lower bound on their dimension, and we prove its sharpness for $r\leq 3$.
As an application, we further give an existence result for triply periodic minimal surfaces immersed in the 3-dimensional Euclidean space, completing a previous result of the second author.
\end{abstract}

\maketitle
\section{Introduction}\label{section INTRODUCTION}
The study of subcanonical points involves two very classical topics in the theory of algebraic curves: Weierstrass points and theta-characteristics.
Let $C$ be a smooth complex projective curve of genus $g$.
A point $p\in C$ is a \emph{subcanonical point} if the line bundle $\mathcal{O}_C\left((2g-2)p\right)$ is isomorphic to the canonical bundle $\omega_C$.
In particular, such a $p\in C$ is a Weierstrass point, that is $h^0\left(C,gp\right)=\dim H^0\left(C,gp\right)>1$, and the line bundle $\mathcal L=\mathcal{O}_C\left((g-1)p\right)$ is a theta-characteristic, as $\mathcal L^{\otimes 2}\cong \omega_C$.

Let $\mathcal{M}_{g,1}$ denote the moduli space of pointed curves of genus $g$, and let $\mathcal{G}_g\subset\mathcal{M}_{g,1}$ be the subcanonical locus, which is described by pairs $(C,p)$ such that $p$ is a subcanonical point of $C$.
It follows from the work of  Kontsevich and Zorich \cite{KZ} that for $g>3,$ $\mathcal{G}_g$ consists of three  irreducible components $\mathcal{G}_g^{\textrm{hyp}}$, $\mathcal{G}_g^{\textrm{odd}}$ and $\mathcal{G}_g^{\textrm{even}}$ having dimension $2g-1$.
The locus $\mathcal{G}_g^{\textrm{hyp}}$ parameterizes hyperelliptic curves with a Weierstrass point, whereas the components $\mathcal{G}_g^{\textrm{odd}}$ and $\mathcal{G}_g^{\textrm{even}}$ correspond to non-hyperelliptic curves such that $h^0\left(C,(g-1)p\right)$ is odd and even, respectively.

Every pointed hyperelliptic curve $(C,p)$ lying on the locus $\mathcal{G}_g^{\textrm{hyp}}$ satisfies $h^0\left(C,np\right)=\left\lfloor\frac{n+2}{2}\right\rfloor$ for any $0\leq n\leq 2g-2$ (see e.g. \cite[p. 274]{GH}), whereas the sequence of integers $h^0\left(C,np\right)$ for pointed curves in $\mathcal{G}_g^{\textrm{odd}}$ and $\mathcal{G}_g^{\textrm{even}}$ has been described only in the general case.
In particular, Bullock showed that the general point $[C,p]\in\mathcal{G}_g^{\textrm{odd}}$ satisfies $h^0\left(C,(g-1)p\right)= 1$, whereas the general $[C,p]\in\mathcal{G}_g^{\textrm{even}}$ is such that $h^0\left(C,(g-1)p\right)= 2$ and the linear series $\left|(g-1)p\right|$ is base-point-free (see \cite{Bu}).

Then it is interesting to investigate dimension and irreducibility of subsets of $\mathcal{G}_g^{\textrm{odd}}$ and $\mathcal{G}_g^{\textrm{even}}$ consisting of pointed curves whose associated theta-characteristic has higher number of sections.
In this paper we discuss such a problem, and we focus on the subloci of $\mathcal{M}_{g,1}$ defined as
\begin{equation*}\label{equation G_g^r}
\mathcal{G}_{g}^r:=\left\{[C,p]\in \mathcal{G}_{g}\left|
\begin{array}{l}
C\textrm{ is non-hyperelliptic},\, h^0\left(C,(g-1)p\right)\geq r+1 \\
\textrm{and } h^0\left(C,(g-1)p\right)\equiv r+1\,(\mathrm{mod}\,2)
\end{array}\right.\right\}.
\end{equation*}
In particular, we provide a general bound on the dimension of $\mathcal{G}_{g}^r$, and we prove its sharpness for $r\leq 3$.
Furthermore, as an application of the results obtained in the case $r=2$, we complete an existence result included in \cite{Pi} for triply periodic minimal Riemann surfaces immersed in the 3-dimensional Euclidean space.

\smallskip
It is clear from the definition that $\mathcal{G}_{g}^0=\mathcal{G}_g^{\textrm{odd}}$, $\mathcal{G}_{g}^1=\mathcal{G}_g^{\textrm{even}}$ and the locus $\mathcal{G}_{g}^r$ is contained in $\mathcal{G}_g^{\textrm{odd}}$ (resp. $\mathcal{G}_g^{\textrm{even}}$) when $r+1$ is odd (resp. even).
We note further that early bounds on the dimension of $\mathcal{G}_{g}^r$ may be obtained from the classical theory of Weierstrass points.
However they turn out to fail being sharp as $r\geq 1$ (see Remark \ref{remark EARLY BOUND}).

On the other hand, we interpret subcanonical points in terms of theta-characteristics, and we consider the loci $\mathcal{M}_{g}^r\subset \mathcal{M}_g$ of curves possessing a theta-characteristic with at least $r+1$ global sections and the same parity as $r+1$.
Then, using Harris' bound on the codimension of $\mathcal{M}_{g}^r$ in $\mathcal{M}_g$  (see \cite{Ha}), we achieve the following.
\begin{theorem}\label{theorem BOUND}
Let $r\geq 0$. Then either $\mathcal{G}_{g}^r$ is empty or any irreducible component $\mathcal{Z}$ of $\mathcal{G}_{g}^r$ has dimension
\begin{equation*}
\dim \mathcal{Z}\geq 2g-\frac{r(r-1)}{2}-1.
\end{equation*}
\end{theorem}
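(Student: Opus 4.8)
The plan is to realize $\mathcal{G}_g^r$ as a degeneracy locus over Harris' locus and to read off the codimension from a single transparent dimension count. First I would pass from $\mathcal{M}_g^r$ to the finite cover $\mathcal{S}_g^r$ parametrizing pairs $(C,\theta)$ with $\theta$ a theta-characteristic such that $h^0(C,\theta)\geq r+1$ and $h^0(C,\theta)\equiv r+1\,(\mathrm{mod}\,2)$; since this cover is finite, Harris' bound \cite{Ha} yields that every component $W$ of $\mathcal{S}_g^r$ satisfies $\dim W\geq 3g-3-\binom{r+1}{2}$. Over the open locus of $W$ where $h^0(C,\theta)$ is constant equal to $r+1$ I would form the universal curve $\pi\colon\mathcal{C}\to W$ together with the universal theta-characteristic $\Theta$, and consider the projectivized bundle of sections $\mathbb{P}:=\mathbb{P}(\pi_*\Theta)$, a $\mathbb{P}^r$-bundle over $W$, so that $\dim\mathbb{P}=\dim W+r$.

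The central observation is that a pair $(C,p)$ lies in $\mathcal{G}_g^r$ exactly when the theta-characteristic $\theta=\mathcal{O}_C((g-1)p)$ carries a section whose divisor is $(g-1)p$, that is, is concentrated at a single point. I would encode this through the universal divisor morphism $\delta\colon\mathbb{P}\to\mathrm{Sym}^{g-1}(\mathcal{C}/W)$ sending $(C,\theta,[s])$ to $\mathrm{div}(s)$, and through the locus $\Delta\subset\mathrm{Sym}^{g-1}(\mathcal{C}/W)$ of relative divisors of the form $(g-1)x$. Fibrewise $\Delta$ is the image of $C$ under the small diagonal, hence a smooth subvariety of codimension $g-2$ inside the $(g-1)$-dimensional symmetric product; being smooth it is a local complete intersection, so each component of $\delta^{-1}(\Delta)$ has codimension at most $g-2$ in $\mathbb{P}$, with no transversality hypothesis required. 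Combining the three numerical inputs gives
\[
\dim\delta^{-1}(\Delta)\ \geq\ \dim\mathbb{P}-(g-2)\ \geq\ \Big(3g-3-\binom{r+1}{2}+r\Big)-(g-2)\ =\ 2g-1-\binom{r}{2},
\]
where the cancellation $r-\binom{r+1}{2}=-\binom{r}{2}$ is precisely what upgrades Harris' $\binom{r+1}{2}$ to the required $\binom{r}{2}$.

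Finally I would identify $\delta^{-1}(\Delta)$ with $\mathcal{G}_g^r$: the assignment $(C,\theta,[s])\mapsto(C,x)$ with $\mathrm{div}(s)=(g-1)x$ is injective, since $x$ recovers both $p=x$ and $\theta=\mathcal{O}_C((g-1)x)$, and its image is exactly the part of $\mathcal{G}_g^r$ lying over $W$; as each component of the source has dimension at least $2g-1-\binom{r}{2}$, so does each component $\mathcal{Z}$ of $\mathcal{G}_g^r$ meeting this chart. The step I expect to be most delicate is the bookkeeping over the stratification of $\mathcal{S}_g^r$ by the value of $h^0$: a component $\mathcal{Z}$ whose general point has $h^0(C,(g-1)p)=r+1$ is captured directly by the bundle $\mathbb{P}$ above, whereas components on which $h^0$ jumps map into the deeper strata of $W$, where $\pi_*\Theta$ fails to be locally free. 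There one must restrict to a locus of constant rank and check that the additional sections enlarge the fibre of $\mathbb{P}$ enough to compensate, or else run the argument at the corresponding higher level and compare the resulting bounds. Keeping the codimension estimate for $\delta^{-1}(\Delta)$ uniform across this jumping locus is the main obstacle; the remaining inputs—Harris' bound and the elementary geometry of the small diagonal in $\mathrm{Sym}^{g-1}C$—are essentially formal.
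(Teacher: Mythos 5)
Your numerical skeleton is exactly the paper's: Harris' bound on the theta-characteristic locus, a gain of $r$ from the projective space of sections, and a loss of $g-2$ from the condition that the divisor be concentrated at one point, with the same cancellation $r-\binom{r+1}{2}=-\binom{r}{2}$. The implementation differs only in packaging: the paper fixes $[C,p]\in\mathcal{G}_g^r$, takes a versal deformation space $U$ of the pointed spin curve $\left(C,\mathcal{O}_C\left((g-1)p\right),p\right)$ inside $\mathcal{S}_{g,1}$, and intersects, inside the smooth relative symmetric product $\mathcal{C}^{(g-1)}\to U$, the small-diagonal section $\mathcal{P}$ with the incidence variety $\mathcal{Y}$ of divisors moving in the theta linear systems over the degeneracy locus $U^r$, pushing down by the finite map $\mathcal{S}_{g,1}\to\mathcal{M}_{g,1}$; you instead work globally over a component $W$ of $\mathcal{S}_g^r$ and pull back the small diagonal under the divisor map. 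Your lci/Krull argument for the small diagonal is fine even over a singular $W$ (both $\Delta$ and the relative symmetric product are smooth over $W$, so the immersion is regular), and your dimension count is correct \emph{where your construction exists}.

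The gap you flag at the end is, however, genuine, and neither of your proposed repairs closes it. Your bundle $\mathbb{P}(\pi_*\Theta)$ exists only over the stratum of $W$ where $h^0=r+1$, so a component $\mathcal{Z}\subset\mathcal{G}_g^r$ whose \emph{general} point satisfies $h^0\left(C,(g-1)p\right)=r+3$ (such a $\mathcal{Z}$ is simultaneously a component of $\mathcal{G}_g^{r+2}$, and nothing rules this out a priori) never meets your chart. Running the argument ``at the corresponding higher level'' is strictly lossy: at level $r+2$ Harris' bound costs $\binom{r+3}{2}$ while the fibre gains only $r+2$, yielding $\dim\mathcal{Z}\geq 2g-1-\binom{r+2}{2}$, which is weaker than the bound $2g-1-\binom{r}{2}$ that the theorem asserts for $\mathcal{Z}$ as a component of $\mathcal{G}_g^r$. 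This is precisely where the paper's formulation is more robust: its incidence variety $\mathcal{Y}$ is defined over the \emph{whole} level-$r$ locus $U^r$ (not over the open stratum), its fibres $\mathbb{P}\left(H^0(C_t,N_t)\right)$ are only required to have dimension $\geq r$ rather than $=r$, and the input from the Harris--Colombo theorem for families of spin curves is that \emph{every} irreducible component of $U^r$ has codimension at most $\binom{r+1}{2}$ --- a bound which, like the Eagon--Northcott bound for determinantal loci, applies also to components on which $h^0$ generically jumps. Hence the level-$r$ count remains available at jumped points instead of degenerating to the level-$(r+2)$ count. To salvage your version you would have to replace $\mathbb{P}(\pi_*\Theta)$ by the incidence variety $\left\{(C,\theta,[s])\,:\, 0\neq s\in H^0(C,\theta)\right\}$ over all of $\mathcal{S}_g^r$ and establish a Harris-type ``every component'' lower bound for it; as written, your argument proves the theorem only for those components whose generic point satisfies $h^0\left(C,(g-1)p\right)=r+1$.
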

It follows from the description of $\mathcal{G}_g^{\textrm{odd}}$ and $\mathcal{G}_g^{\textrm{even}}$ that the latter bound is actually an equality both when $r=0,1$ with $g\geq 4$, and when $r=2$ as long as $\mathcal{G}_{g}^2$ is non-empty. In this direction, we prove the following.
\begin{theorem}\label{theorem r=2}
For any $g\geq 6$, the locus $\mathcal{G}_g^{2}$ is non-empty and every irreducible component $\mathcal{G}_{g}^2$ has dimension $2g-2$.
\end{theorem}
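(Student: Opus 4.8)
The plan is to reduce the assertion to the non-emptiness of $\mathcal G_g^2$, after which the dimension is automatic. For the dimension, observe that since $r+1=3$ is odd we have $\mathcal G_g^2\subseteq\mathcal G_g^{\textrm{odd}}$, which by Kontsevich--Zorich is irreducible of dimension $2g-1$, and whose general point $[C,p]$ satisfies $h^0(C,(g-1)p)=1$ by Bullock's theorem. Inside $\mathcal G_g^{\textrm{odd}}$ the locus $\mathcal G_g^2$ is cut out by the semicontinuous condition $h^0(C,(g-1)p)\geq 3$, hence is a proper closed subset of an irreducible variety of dimension $2g-1$; therefore each of its components has dimension at most $2g-2$. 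Combined with the bound $\dim\mathcal Z\geq 2g-2$ given by Theorem~\ref{theorem BOUND} for $r=2$, this yields $\dim\mathcal Z=2g-2$ as soon as $\mathcal G_g^2\neq\emptyset$.

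It thus remains to construct, for every $g\geq 6$, a non-hyperelliptic curve $C$ with an odd theta-characteristic $\mathcal L$ satisfying $h^0(C,\mathcal L)=3$ and admitting a point $p$ with $(g-1)p\in|\mathcal L|$. I would read this off the net $|\mathcal L|$: its associated morphism $\phi_{\mathcal L}\colon C\to\mathbb P^2$ is, in the cases of interest, birational onto a plane curve $\Gamma$ of degree $g-1$ with $\delta=\binom{g-2}{2}-g$ nodes. By adjunction for nodal plane curves, the requirement $\mathcal L^{\otimes 2}\cong\omega_C$ becomes the existence of a contact curve, namely a plane curve $D$ of degree $g-6$ meeting $\Gamma$ exactly along its nodes, the numerics being consistent since $\deg(D\cdot\Gamma)=(g-1)(g-6)=2\delta$. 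The condition $(g-1)p\in|\mathcal L|$ becomes that $\phi_{\mathcal L}(p)$ is a total inflection point of $\Gamma$, i.e. a smooth point where some line $L$ satisfies $L\cdot\Gamma=(g-1)\,\phi_{\mathcal L}(p)$. The threshold $g\geq 6$ is exactly the Clifford bound: for $g\leq 5$ a special bundle of degree $g-1$ with three sections is impossible on a non-hyperelliptic curve, so $\mathcal G_g^2=\emptyset$ there.

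For the base case $g=6$ one has $\delta=0$, so it suffices to exhibit a smooth plane quintic carrying a total inflection point; such quintics are elementary to write down, and on them $\mathcal O_{\mathbb P^2}(1)$ is an odd theta-characteristic with three sections, with $5p\in|\mathcal O_{\mathbb P^2}(1)|$ at the inflection. For arbitrary $g\geq 6$ I would produce the configuration either directly, as a degree-$(g-1)$ plane curve whose $\delta$ nodes are imposed to lie on a degree-$(g-6)$ contact curve and which carries a total inflection, or by degeneration: I would specialize to a reducible or nodal curve whose limit linear series realizes the data, and then smooth it, invoking the theory of limit linear series together with limit spin structures to ensure that the odd parity, the value $h^0=3$, and the subcanonical point all survive on the general deformation.

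The main obstacle is precisely this non-emptiness for all $g\geq 6$: the contact condition and the total-inflection condition are imposed on one and the same plane curve and a priori interact, so one must verify their compatibility, equivalently that the incidence parametrizing nodal curves with nodes on a contact curve and a marked total inflection is non-empty. I expect to settle this by a parameter count on the relevant Severi-type variety, anchored by the explicit plane-quintic family at $g=6$ and propagated by a smoothing/degeneration argument; once non-emptiness is in hand, the dimension statement follows at once from the first paragraph.
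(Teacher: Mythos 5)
Your first paragraph (the dimension count) and your base case are exactly the paper's: inside the irreducible $(2g-1)$-dimensional $\mathcal{G}_g^{\mathrm{odd}}$, Bullock's theorem makes $\mathcal{G}_g^2$ a proper closed subset, so every component has dimension at most $2g-2$, and Theorem \ref{theorem BOUND} gives the matching lower bound; the paper's Example \ref{example G_6^2} is precisely your smooth plane quintic with a total inflection point. Your Clifford-bound remark explaining the threshold $g\geq 6$ is also correct.

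The genuine gap is non-emptiness for all $g>6$, which you yourself flag as ``the main obstacle'' and then do not close; but this is the entire content of the theorem beyond the base case. Neither of your two routes works as sketched. For the direct plane-curve route, a parameter count on a Severi-type variety can bound the dimension of the incidence locus but cannot by itself establish that it is non-empty, and even granting existence of a nodal degree-$(g-1)$ curve with nodes on a contact curve of degree $g-6$ and a total inflection, the contact condition only yields $\mathcal{L}^{\otimes 2}\cong\omega_C$ with $h^0\geq 3$: nothing forces $h^0(C,\mathcal{L})$ to be odd (if $h^0=4$ you land in $\mathcal{G}_g^3$, not $\mathcal{G}_g^2$), so parity must be controlled separately. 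For the degeneration route, which is what the paper actually does (Theorem \ref{theorem INDUCTION}), the argument is far from a routine invocation of limit linear series: the induction hypothesis that must be propagated is not mere non-emptiness of $\mathcal{G}_{g-1}^2$ but the existence of a component of \emph{exactly} the expected dimension $2(g-1)-2$, because this is what forces the corresponding component of $\mathcal{S}_{g-1}^2$ to have codimension exactly ${3\choose 2}=3$, the hypothesis needed for Farkas's spin-curve smoothing lemma (Lemma \ref{lemma FARKAS SMOOTHING THETA}). One then glues $[C,q]\in\mathcal{Z}_{g-1}$ to an elliptic tail $E$ at a point with $p-q$ of order exactly $2g-2$ (Lemma \ref{lemma LIMIT g^r_(g-1)}), and must keep track \emph{simultaneously} of the spin structure (to preserve odd parity and $h^0\geq 3$) and of the limit $\mathfrak{g}^2_{g-1}$ with $a_2^L(p)=g-1$ (to preserve the subcanonical point); finally, a dimension comparison ($\dim\mathcal{B}'>\dim\mathcal{W}$ in the paper's notation) is needed to show the smoothing family actually contains smooth curves rather than being supported on the boundary. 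Without this machinery, the statement that ``the odd parity, the value $h^0=3$, and the subcanonical point all survive on the general deformation'' is precisely what has to be proved, not an available principle.
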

In particular, we construct a non-empty component of $\mathcal{G}_g^{2}$ whose general point $[C,p]$ is such that $h^0\left(C,(g-1)p\right)= 3$ and the linear series $\left|(g-1)p\right|$ is base-point-free (see Corollary \ref{corollary r=2 and 3}). We shall describe later how this fact is involved in the application to triply periodic minimal surface.

As far as higher values of $r$ are concerned, we prove the sharpness of the bound in Theorem \ref{theorem BOUND} for $r=3$ also. Namely,
\begin{theorem}\label{theorem r=3}
For any $g\geq 9$, the locus $\mathcal{G}_g^{3}$ is non-empty and there exists an irreducible component $\mathcal{Z}\subset\mathcal{G}_{g}^3$ of dimension $2g-4$.
\end{theorem}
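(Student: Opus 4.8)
The plan is to combine the lower bound of Theorem \ref{theorem BOUND} with an explicit construction. Since $r=3$ gives $\tfrac{r(r-1)}{2}=3$, Theorem \ref{theorem BOUND} already guarantees that every irreducible component of $\mathcal{G}_g^3$ has dimension at least $2g-4$; hence it suffices to produce a non-empty irreducible family $\mathcal{Z}\subseteq\mathcal{G}_g^3$ of dimension exactly $2g-4$ and to check that it is a whole component. I would first reformulate the problem through theta-characteristics: a pair $(C,p)\in\mathcal{G}_g^3$ determines the theta-characteristic $\mathcal{L}=\mathcal{O}_C((g-1)p)$ with $h^0(C,\mathcal{L})\geq 4$ even, so that $C$ lies in $\mathcal{M}_g^3$, and conversely $\mathcal{L}$ recovers $p$ as the support of the totally degenerate divisor $(g-1)p\in|\mathcal{L}|$. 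Since a fixed curve carries only finitely many subcanonical points, the forgetful map $\mathcal{G}_g^3\to\mathcal{M}_g$, $(C,p)\mapsto C$, is finite onto its image, and this image is contained in $\mathcal{M}_g^3$.

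For the construction I would start from a component $\mathcal{M}\subseteq\mathcal{M}_g^3$ realizing Harris' bound, that is with $\dim\mathcal{M}=3g-3-\binom{4}{2}=3g-9$ and such that the general member $(C,\mathcal{L})$ satisfies $h^0(C,\mathcal{L})=4$ with $|\mathcal{L}|$ base-point-free (see \cite{Ha}). For such a member the half-canonical system gives a nondegenerate morphism $\phi_{\mathcal{L}}\colon C\to\mathbb{P}^3$ of degree $g-1$ with $\mathcal{O}_C(2)\cong\omega_C$. Inside $\mathcal{M}$ I would then consider the sublocus $\Sigma$ of pairs admitting a point $p$ with $(g-1)p\in|\mathcal{L}|$, equivalently a point whose osculating plane meets $\phi_{\mathcal{L}}(C)$ only at $p$ with contact $g-1$. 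As the osculating plane of a nondegenerate space curve has generic contact $3$, prescribing contact $g-1$ at a moving point imposes $(g-1)-3=g-4$ conditions on a $1$-dimensional family of points, so the excess is $g-5$; heuristically $\codim_{\mathcal{M}}\Sigma=g-5$ and $\dim\Sigma=2g-4$. The associated family $\mathcal{Z}$ of pairs $(C,p)$ maps finitely onto $\Sigma$, whence $\dim\mathcal{Z}=2g-4$.

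The main obstacle is non-emptiness: since the expected dimension $5-g$ of the relevant intersection in $\mathrm{Sym}^{g-1}C$ is negative, I cannot deduce $\Sigma\neq\emptyset$ from a count and must exhibit points of $\mathcal{Z}$. For the base case $g=9$ I would use curves of type $(4,4)$ on a smooth quadric $Q\subset\mathbb{P}^3$: these are half-canonical of degree $8$, they sweep out a component of $\mathcal{M}_9^3$ of the expected dimension $18$, and I would force a hyperosculating point by requiring a conic in $|\mathcal{O}_Q(1,1)|$ to meet $C$ only at one point to order $8$, producing an explicit member of $\Sigma$. To reach every $g\geq 9$ I would propagate existence by a degeneration argument: attach to a base pair a suitable pointed tail and build the theta-characteristic, the subcanonical point and the four sections as a limit linear series, then smooth, checking that the smoothing stays in $\mathcal{G}_g^3$ and realizes the expected dimension. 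Controlling that this limit-linear-series construction both exists and deforms with the predicted number of parameters is the delicate point.

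Finally I would upgrade the heuristic count to an equality and identify $\mathcal{Z}$ as a component. A tangent-space computation at a general point of $\Sigma$ should show that the $g-5$ osculating conditions are independent, giving $\codim_{\mathcal{M}}\Sigma=g-5$ and hence $\dim\mathcal{Z}=2g-4$ exactly. Then, for any component $\mathcal{W}\supseteq\mathcal{Z}$ of $\mathcal{G}_g^3$, the finite map to $\mathcal{M}_g$ sends $\mathcal{W}$ to an irreducible set meeting the interior of $\mathcal{M}$, hence contained in $\mathcal{M}$ and, since every pair of $\mathcal{W}$ carries a hyperosculating point, in $\Sigma$; therefore $\dim\mathcal{W}\leq 2g-4=\dim\mathcal{Z}$ and $\mathcal{W}=\mathcal{Z}$. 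Because theta-characteristics jump in parity, $h^0$ stays even, and genericity in $\mathcal{M}$ together with the single prescribed osculating point keeps $h^0(C,(g-1)p)=4$, so $\mathcal{Z}$ genuinely lies in $\mathcal{G}_g^3$ and not in $\mathcal{G}_g^5$, completing the proof.
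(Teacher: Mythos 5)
Your use of Theorem \ref{theorem BOUND} for the lower bound and your base case at $g=9$ (curves of type $(4,4)$ on a smooth quadric in $\mathbb{P}^3$ with a point of contact $8$ with a plane) coincide with the paper's own ingredients (Example \ref{example G_9^3 minimal}). But the step you defer as ``the delicate point''---propagating existence \emph{and} the exact dimension $2g-4$ from $g=9$ to every $g\geq 9$---is not a detail: it is the entire content of the paper's proof. The paper carries it out as Theorem \ref{theorem INDUCTION}: given a component $\mathcal{Z}_{g-1}\subset\mathcal{G}^3_{g-1}$ of dimension $2(g-1)-4$, one glues an elliptic tail $E$ to $[C,q]\in\mathcal{Z}_{g-1}$ at the subcanonical point $q$, with $p-q\in E$ torsion of order exactly $2g-2$; Lemma \ref{lemma FARKAS SMOOTHING THETA} (Cornalba's spin curves) shows the associated spin curve lies in $\overline{\mathcal{S}}^r_g$, Lemma \ref{lemma LIMIT g^r_(g-1)} (Eisenbud--Harris limit linear series) produces and controls the limit $\mathfrak{g}^3_{g-1}$ with $a_3^L(p)=g-1$, parity of $h^0$ along the smoothing is controlled by the disjointness of even and odd spin components (Theorem \ref{theorem COLOMBO}), and---crucially---one proves that the locus $\mathcal{W}$ of such nodal pointed curves is an irreducible \emph{component} of $\overline{\mathcal{Z}}_g\cap\Delta_{1,1}$, which is what forces $\dim\mathcal{Z}_g=\dim\mathcal{W}+1=2g-4$. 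Your sketch contains none of this: no smoothing statement, no control of $h^0$ and its parity after smoothing, and no mechanism pinning the dimension of the resulting smooth component down to exactly $2g-4$.

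Moreover, the mechanism you do propose for the exact dimension is not merely unproved but false. You want $\codim_{\mathcal{M}}\Sigma=g-5$ by showing that the contact conditions (the $g-4$ conditions, less one for the moving point) are independent at a general point of $\Sigma$, and then conclude $\dim\mathcal{W}\leq\dim\Sigma=2g-4$ for any component $\mathcal{W}\supseteq\mathcal{Z}$ of $\mathcal{G}^3_g$. This fails already at your base case $g=9$: by the paper's Example \ref{example G_9^3 maximal}, $\mathcal{G}^3_9$ has a component $\mathcal{Z}_2$ of dimension $2g-3=15$, given by complete intersections of a quadric \emph{cone} with a quartic, with a ruling line of contact $4$ at $p$. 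Such curves are limits of $(4,4)$ curves on smooth quadrics, so their images lie in the very component $\mathcal{M}\subset\mathcal{M}^3_9$ you work with; hence $\Sigma$ has a component of codimension $g-6$, not $g-5$, and no tangent-space computation can show otherwise. The correct way to isolate the good component is either the boundary argument above or, at the base case, upper semicontinuity of canonical ramification sequences, as the paper does (comparing $(0,\dots,0,3,3,3,8)$ with $(0,0,0,1,1,3,3,5,8)$). A lesser inaccuracy: the input you attribute to \cite{Ha}---a component of $\mathcal{M}^3_g$ of expected codimension $6$ whose general member has $h^0(C,\mathcal{L})=4$ and $|\mathcal{L}|$ base-point-free---is not in \cite{Ha}, which only proves the codimension \emph{bound}; existence of such components is the nontrivial content of \cite{Te}, \cite{Fa}, \cite{Be}, and would itself need an argument for every $g\geq 9$.
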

Moreover, we still determine the sequence of integers $h^0\left(C,np\right)$ at the general point $[C,p]$ of the irreducible component we construct (see Corollary \ref{corollary r=2 and 3}).
We also study the locus $\mathcal{G}^3_g$ when $g=9$ and 10, showing that it is reducible and describing an irreducible component exceeding the `expected dimension' given by Theorem \ref{theorem BOUND} (see Section \ref{section EXAMPLES}).

\smallskip
Theorems \ref{theorem r=2} and \ref{theorem r=3} are analogous to the results in \cite{Fa,Te} on the sharpness of Harris' bound on the codimension of $\mathcal{M}^r_g$ in $\mathcal{M}_g$.
Furthermore, the argument we use to deduce our theorems follows the approach of Farkas, and involves also some of his technical results (see \cite[Section 2]{Fa}).
Namely, we argue by induction on $g$ and, beside the description of the base examples, we show that the existence of an irreducible component of $\mathcal{G}^r_{g-1}$ having the `expected dimension' leads to the existence of a component of $\mathcal{G}^r_{g}$ still reaching equality in Theorem \ref{theorem BOUND} (see Theorem \ref{theorem INDUCTION}).
In order to prove such a result, we combine Cornalba's theory of spin curves \cite{Cor} and Eisenbud-Harris' theory of limit linear series \cite{EH1,EH2}, and we extend subcanonical points to these settings.
In particular, this provides a criterium for smoothing `limit subcanonical points' and preserving their sequence of Weierstrass gaps (see Corollary \ref{corollary SMOOTHING WEIERSTRASS}).

We note finally that the sharpness of the bound on $\mathcal{M}^r_g$ has been recently proved in \cite{Be} for all $r\geq 2$ and $g\geq {r+2 \choose 2}$, according to  \cite[Conjecture 3.4]{Fa}.
In this line, it seems natural to conjecture the sharpness of the bound in Theorem \ref{theorem BOUND} for the very same values of $r$ and $g$.

\smallskip
The original motivation of this work
was the study of triply periodic minimal surfaces in the 3-dimensional Euclidean space $\mathbb{R}^3.$ They are very classical geometric objects, which have been studied long since (see e.g. \cite{Sc,Me}).
We say that a compact connected Riemann surface $M$ of genus $g\geq 3$ is \emph{periodic} if admits a conformal minimal immersion $\iota\colon M\longrightarrow \mathbb{T}^3$ into a flat 3-dimensional torus $\mathbb{T}^3=\mathbb{R}^3/\Lambda$.
Then a proper \emph{triply periodic} minimal surface $S$ in $\mathbb{R}^3$ is the inverse image under the universal covering $\mathbb{R}^3\longrightarrow \mathbb{T}^3$ of some immersed minimal surface $\iota(M)\subset \mathbb{T}^3$.
Following \cite{Pi}, we discuss two related problems on this topic.
On one hand, we are aimed at describing the locus $\mathcal{R}_g$ of isomorphism classes of periodic surfaces $[M]\in\mathcal{M}_g$.
On the other hand, we investigate the existence of minimal surfaces in any flat 3-dimensional real torus.

By Generalized Weierstrass Representation (see e.g. \cite{AP}), each periodic surface $M$ carries a theta-characteristic---or, equivalently a spin structure---$\mathcal{L}$ such that $h^0(M,\mathcal{L})\geq 2$ and the linear series $|\mathcal{L}|$ is base-point-free.
Thus the locus $\mathcal{R}_g$ consists of two disjoint loci $\mathcal{R}^{\textrm{even}}_g\subset \mathcal{M}^1_g$ and $\mathcal{R}^{\textrm{odd}}_g\subset \mathcal{M}^2_g$ parameterizing \emph{even} and \emph{odd} periodic surfaces, respectively.
We also denote by $\mathcal{R}^{\textrm{hyp}}_g$ the locus of hyperelliptic periodic surfaces, and we say that a proper triply periodic minimal surface in $\mathbb{R}^3$ is even, odd or hyperelliptic if the underlying periodic surface is.

Furthermore, among Riemann surfaces $M$ possessing a theta-characteristic $\mathcal{L}$ as above, periodic surfaces are characterized by a particular condition on their real periods.
In~\cite{Pi}, the latter condition has been interpreted in terms of first order deformations of the associated real period matrix.
Using the results in \cite{MR} and \cite{Na}, the existence of periodic surfaces in $\mathcal{M}_g$ follows from the existence of pointed curves $[C,p]\in \mathcal{G}_g\subset \mathcal{M}_{g,1}$ such that $h^0\left(C,(g-1)p\right)\geq 2$ and the linear series $\left|(g-1)p\right|$ is base-point-free.
Thus the description of the subcanonical loci $\mathcal{G}^{1}_g$, $\mathcal{G}^{2}_g$ and $\mathcal{G}^{\textrm{hyp}}_g$ leads to the following density result, which extends \cite[Theorem 1]{Pi} to odd periodic surfaces of genus $g\geq 7$.
\begin{theorem}\label{theorem DENSITY}
The following hold:
\begin{itemize}
  \item[(i)] If $g\geq 3$, then $\mathcal{R}^{\mathrm{even}}_g$ is dense in $\mathcal{M}^{1}_g$.
  \item[(ii)] If $g\geq 6$, then $\mathcal{R}^{\mathrm{odd}}_g$ is dense in $\mathcal{M}^{2}_g$.
  \item[(iii)] If $g\geq 3$ and $g$ is odd, then $\mathcal{R}^{\mathrm{hyp}}_g$ is dense in the hyperelliptic locus $\mathcal{M}^{\mathrm{hyp}}_g$.
\end{itemize}
\end{theorem}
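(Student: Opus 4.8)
The plan is to treat the three statements uniformly: in each case I would produce, on the relevant locus $\mathcal{M}^1_g$, $\mathcal{M}^2_g$ or $\mathcal{M}^{\textrm{hyp}}_g$, a curve carrying a base-point-free theta-characteristic coming from a subcanonical point, and then invoke the reduction of \cite{Pi} (via \cite{MR,Na}) to pass from the existence of such a curve to the density of the corresponding periodic surfaces.

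First I would make the reduction precise. By the Generalized Weierstrass Representation, a class $[M]$ carrying a theta-characteristic $\mathcal{L}$ with $h^0(M,\mathcal{L})\geq 2$ and $|\mathcal{L}|$ base-point-free admits Weierstrass data whose real periods define a real-analytic map $\Pi$ from the ambient locus ($\mathcal{M}^1_g$, $\mathcal{M}^2_g$ or $\mathcal{M}^{\textrm{hyp}}_g$) to configurations of $2g$ vectors in $\mathbb{R}^3$; the surface $M$ is periodic exactly when this configuration generates a rank-three lattice. Following \cite{Pi}, periodicity is governed by the first-order deformation of the associated real period matrix, and at a point where this deformation is non-degenerate the map $\Pi$ is open. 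Since configurations generating a lattice are dense among those spanning $\mathbb{R}^3$ (approximate any spanning configuration by points of a sufficiently fine lattice), openness of $\Pi$ at a single point of an irreducible locus forces the periodic surfaces to be dense throughout it. Thus it suffices, on each locus, to exhibit one curve with a base-point-free theta-characteristic at which the first-order condition holds; such curves are furnished by subcanonical points, where the full ramification of $\mathcal{O}_C((g-1)p)$ at $p$ makes the Weierstrass data explicit.

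For (i) and (ii) I would quote the structure results of the previous sections. By Bullock's description \cite{Bu}, the general $[C,p]\in\mathcal{G}^{\textrm{even}}_g=\mathcal{G}^1_g$ has $h^0(C,(g-1)p)=2$ with $|(g-1)p|$ base-point-free, so the even theta-characteristic $\mathcal{O}_C((g-1)p)$ yields the required curve in $\mathcal{M}^1_g$ for every $g\geq 3$ (for $g=3$ one has $\mathcal{M}^1_3=\mathcal{M}^{\textrm{hyp}}_3$, so this overlaps with (iii)); this gives (i). For (ii), Theorem \ref{theorem r=2} together with Corollary \ref{corollary r=2 and 3} produces for every $g\geq 6$ a component of $\mathcal{G}^2_g$ whose general point $[C,p]$ has $h^0(C,(g-1)p)=3$ and $|(g-1)p|$ base-point-free, and the odd theta-characteristic $\mathcal{O}_C((g-1)p)$ then yields the required curve in $\mathcal{M}^2_g$, giving (ii). For (iii), I take $p$ a Weierstrass point of a hyperelliptic curve $C$: since $2p$ moves in the hyperelliptic pencil, for $g$ odd one has $(g-1)p\sim\frac{g-1}{2}\cdot 2p$, so $\mathcal{O}_C((g-1)p)\cong\mathcal{O}_C(2p)^{\otimes\frac{g-1}{2}}$ is a tensor power of the base-point-free hyperelliptic bundle, with $h^0(C,(g-1)p)=\frac{g+1}{2}\geq 2$; for $g$ even the divisor $(g-1)p$ retains a base point at $p$, which is exactly why ``$g$ odd'' is required. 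As $\mathcal{G}^{\textrm{hyp}}_g$ and $\mathcal{M}^{\textrm{hyp}}_g$ both have dimension $2g-1$, these theta-characteristics occur at a general hyperelliptic curve, and the reduction gives (iii).

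The main obstacle is the non-degeneracy of the first-order deformation of the real period matrix at the chosen subcanonical points, equivalently the openness of $\Pi$ there: this is the genuine content behind the reduction, resting on the deformation-theoretic computations of \cite{Pi,MR,Na} and on the explicit form of the Weierstrass data attached to $\mathcal{O}_C((g-1)p)$. A secondary difficulty is bookkeeping the components and irreducibility of the target loci --- one must assert density on the component of $\mathcal{M}^2_g$ actually met by the family of Corollary \ref{corollary r=2 and 3}, and likewise confirm that the even theta-null locus $\mathcal{M}^1_g$ and the hyperelliptic locus $\mathcal{M}^{\textrm{hyp}}_g$ are irreducible --- so that openness at one subcanonical point propagates to density.
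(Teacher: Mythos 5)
Your overall strategy coincides with the paper's: parts (i) and (iii) are simply quoted from \cite[Section 6]{Pi} (no new argument is given or needed there), and for (ii) the paper, exactly like you, feeds the curves produced by Theorem \ref{theorem r=2} and Corollary \ref{corollary r=2 and 3} into Pirola's period-deformation machinery. The problem is that your proposal stops precisely where the proof has to start. What you label ``the main obstacle'' --- the non-degeneracy of the first-order deformation of the real period matrix at the chosen point --- is not an ad hoc computation to be extracted from \cite{Pi}, \cite{MR}, \cite{Na}: it is packaged there as a precise sufficient criterion, restated in the paper as Proposition \ref{proposition DENSITY}. That criterion asks for three checkable hypotheses on a 4-tuple $[X,\mathcal{L},s_0,s_1]\in\mathcal{P}_g$: first, $s_0$ and $s_1$ have no common zeroes; second, $s_1$ vanishes at a single point; third, the versal deformation space $B$ of $(X,\mathcal{L},s_0,s_1)$ in $\mathcal{P}_g$ satisfies $\dim_{\mathbb{R}}B=6g$. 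The first two are exactly what subcanonicity buys: one takes $s_1$ with $\mathrm{div}(s_1)=(g-1)p$, a section vanishing at the single point $p$, and $s_0$ avoiding $p$ by base-point-freeness of $\left|(g-1)p\right|$. You only gesture at this (``full ramification makes the Weierstrass data explicit'') instead of identifying it as the hypothesis being verified.

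The third hypothesis is the step your proposal omits entirely, and it is a genuine verification, not bookkeeping: one needs that every irreducible component of $\mathcal{S}^2_g$ has complex dimension exactly $3g-6$ (quoted in the paper from \cite{Te,Fa2}), and that any component through $[X,\mathcal{L}]$ has general point $[C,\xi]$ with $h^0(C,\xi)=3$ exactly, whence for a complete versal family $\dim_{\mathbb{R}}B=2\left(3g-6+2h^0(C,\xi)\right)=6g$. If some component through the chosen point had excess dimension, or carried more sections generically, Proposition \ref{proposition DENSITY} would simply not apply, and no appeal to ``openness at one point'' would rescue the argument. Once the three hypotheses are checked, the criterion yields density of $B(\mathbb{Q})$ in all of $B$ (not merely near the special point), and completeness of $B$ --- its image contains a whole irreducible component of $\mathcal{P}_g$ --- pushes this down through the modular map $\nu\colon B\longrightarrow\mathcal{M}_g$; the propagation issue you worry about at the end is thereby absorbed into the statement of the criterion rather than handled by a separate analyticity argument. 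In short: right curves, right references, but the central analytic step is left as an acknowledged gap, and the dimension count that makes the cited criterion applicable is missing.
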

Analogously, by means of the very same argument of \cite[Theorem 2]{Pi} relying on isogenies between flat $3$-dimensional tori, we achieve the following existence result.
\begin{theorem}\label{theorem MINIMAL SURFACES}
Any flat 3-dimensional torus contains countably many distinct genus $g$ immersed compact even minimal surfaces if $g\geq 3$, odd minimal surfaces if $g\geq 6$, and hyperelliptic minimal surfaces if $g\geq 3$ is odd.

The 3-dimensional Euclidean space contains countably many 6-dimensional families of genus $g$ proper triply periodic even minimal surfaces if $g\geq 3$, odd minimal surfaces if $g\geq 6$, and hyperelliptic minimal surfaces if $g\geq 3$ is odd.
\end{theorem}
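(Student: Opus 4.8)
The plan is to deduce the statement directly from the density result of Theorem~\ref{theorem DENSITY} by feeding it into the isogeny argument of \cite[Theorem~2]{Pi}; the only new input required is the density of odd and hyperelliptic periodic surfaces in the ranges $g\geq 6$ and $g\geq 3$ odd. First I would recall, via the Generalized Weierstrass Representation, that a genus-$g$ Riemann surface $M$ is periodic exactly when it carries a base-point-free theta-characteristic $\mathcal{L}$ with $h^0(M,\mathcal{L})\geq 2$ whose associated real periods generate a rank-three lattice $\Lambda\subset\mathbb{R}^3$; the immersion $\iota\colon M\to\mathbb{R}^3/\Lambda$ is then obtained by integrating the holomorphic one-forms built from $H^0(M,\mathcal{L})$. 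By Theorem~\ref{theorem DENSITY} the loci $\mathcal{R}^{\mathrm{even}}_g$, $\mathcal{R}^{\mathrm{odd}}_g$ and $\mathcal{R}^{\mathrm{hyp}}_g$ are dense, hence non-empty, in the respective moduli for $g\geq 3$, $g\geq 6$ and $g\geq 3$ odd, which already yields at least one periodic surface of each prescribed type.

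The core step is the isogeny argument. Let $\mathcal{P}$ denote the set of flat tori, up to isometry, that contain a genus-$g$ minimal surface of the given type. Two elementary observations make $\mathcal{P}$ highly symmetric: rescaling the Weierstrass data multiplies $\Lambda$ by a positive constant and rotating the target rotates $\Lambda$, so $\mathcal{P}$ is invariant under $\mathbb{R}_{>0}\times\mathrm{SO}(3)$; and if $\Lambda\subseteq\Lambda'$ is a finite-index inclusion, then composing $\iota$ with the covering $\mathbb{R}^3/\Lambda\to\mathbb{R}^3/\Lambda'$ produces a minimal immersion of the \emph{same} surface $M$ into $\mathbb{R}^3/\Lambda'$, so $\mathcal{P}$ is closed under enlarging the period lattice. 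I would then combine these closure properties with the density of $\mathcal{P}$ coming from Theorem~\ref{theorem DENSITY}: given an arbitrary flat torus $\mathbb{R}^3/\Lambda'$, one locates a realizable finite-index sublattice $\Lambda\subseteq\Lambda'$ and pushes the corresponding immersion down through the covering. This is precisely the mechanism of \cite[Theorem~2]{Pi}, now available for the odd and hyperelliptic cases thanks to the extended density, and it shows that \emph{every} flat three-dimensional torus contains a minimal surface of each admissible type.

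For the counting and the families, I would argue as follows. Once the torus $\mathbb{R}^3/\Lambda'$ is fixed, the requirement that the real periods of $\iota$ lie in the fixed lattice $\Lambda'$ is a discrete arithmetic condition imposed on the otherwise continuously varying Weierstrass data; the dimension count of \cite[Theorem~2]{Pi} shows that the resulting solutions are isolated, so each fixed torus carries countably many distinct genus-$g$ minimal surfaces of the given type. Letting $\Lambda'$ vary over the six-dimensional moduli of flat three-tori---lattices in $\mathbb{R}^3$ modulo $\mathrm{SO}(3)$---and taking inverse images under the universal covering $\mathbb{R}^3\to\mathbb{R}^3/\Lambda'$ sweeps out six-dimensional families of proper triply periodic minimal surfaces in $\mathbb{R}^3$, one family per discrete branch, hence countably many families of each type in the stated ranges of $g$.

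The hard part will be the isogeny step of the second paragraph: upgrading the \emph{density} of realizable tori to the assertion that \emph{every} flat torus is realizable. Concretely, one must show that an arbitrary lattice $\Lambda'$ admits a realizable finite-index sublattice, which forces one to understand how the shapes of sublattices of a fixed lattice distribute in moduli and to exploit the openness of the image of the period map. This is exactly the technical content already settled in \cite[Theorem~2]{Pi}; the present contribution is to supply the missing density input via Theorem~\ref{theorem DENSITY}, thereby extending the conclusion to odd periodic surfaces for $g\geq 6$ and to hyperelliptic ones for odd $g\geq 3$.
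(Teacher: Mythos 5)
Your overall strategy is the same as the paper's: the even and hyperelliptic assertions are quoted from \cite[Theorem 2]{Pi} (note that the hyperelliptic case is \emph{not} new -- only the odd case is), and the odd case is obtained by supplying a new input to Pirola's isogeny machinery. The gap is in what you supply. You claim that Theorem \ref{theorem DENSITY} gives ``the density of $\mathcal{P}$'', where your $\mathcal{P}$ is the set of realizable flat tori. It does not: Theorem \ref{theorem DENSITY} asserts that $\mathcal{R}^{\mathrm{odd}}_g$ is dense in $\mathcal{M}^2_g$, a statement inside the moduli space of \emph{curves}, and it carries no information about which lattices $\Lambda\subset\mathbb{R}^3$ occur as period lattices; your $\mathcal{P}$ lives in the $6$-dimensional moduli of flat tori, an entirely different space. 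Moreover, even granting density of $\mathcal{P}$, the isogeny step would not close: the shapes of finite-index sublattices of a fixed $\Lambda'$ form a countable dense set, and two dense sets need not intersect, so ``density of $\mathcal{P}$ plus closure under finite-index enlargement'' cannot yield that \emph{every} flat torus is realizable. What \cite[Section 6.19]{Pi} actually consumes is not a density statement in any moduli space of curves but the submersive structure of the period map at a specific point, which is what forces the realizable lattice shapes to contain a nonempty open set; you acknowledge this openness is needed, but it is not a consequence of the density you invoke.

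This is exactly why the paper's proof appeals to the \emph{proof} of Theorem \ref{theorem DENSITY} rather than to its statement: by Corollary \ref{corollary r=2 and 3}, for $g\geq 6$ there exists a 4-tuple $[X,\mathcal{L},s_0,s_1]\in\mathcal{P}_g$ with $h^0(X,\mathcal{L})=3$, with $s_0,s_1$ having no common zeroes and $s_1$ vanishing at a single point, and whose complete versal deformation space $B$ satisfies $\dim_{\mathbb{R}}B=2\left(3g-6+2h^0(X,\mathcal{L})\right)=6g$. These are precisely the hypotheses of Proposition \ref{proposition DENSITY}, which yields density of $B(\mathbb{Q})$ in $B$ together with the first-order (Inverse Function Theorem) control of the period map; this local datum is the input that \cite[Section 6.19]{Pi} uses to produce, in every flat torus, countably many odd minimal surfaces, and the corresponding $6$-dimensional families in $\mathbb{R}^3$. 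Once you replace your appeal to the statement of Theorem \ref{theorem DENSITY} by an appeal to Proposition \ref{proposition DENSITY} applied to the point furnished by Corollary \ref{corollary r=2 and 3}, your argument coincides with the paper's.
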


The paper is organized as follows.
In Section \ref{section PRELIMINARIES} we recall some basic facts and we prove preliminary results involved in the analysis of subcanonical loci in $\mathcal{M}_{g,1}$.
Section \ref{section EXAMPLES} mainly concerns the description of the loci $\mathcal{G}^2_6$ and $\mathcal{G}^3_9$, which are the base cases to develop our inductive argument.
Section \ref{section LOCI OF SUBCANONICAL POINTS} is devoted to prove Theorems \ref{theorem BOUND}, \ref{theorem r=2}, \ref{theorem r=3} and certain additional results concerning the subcanonical locus and its boundary in $\overline{\mathcal{M}}_{g,1}$.
Finally, in Section \ref{section PERIODIC MINIMAL SURFACES} we turn to periodic minimal surfaces and, after connecting them to theta-characteristics, we retrace the argument of \cite{Pi} to prove Theorems \ref{theorem DENSITY} and \ref{theorem MINIMAL SURFACES}.

\section{Preliminaries}\label{section PRELIMINARIES}

This section concerns the preliminary notions and results necessary to discuss the loci $\mathcal{G}^r_{g}$ and to prove the related theorems we stated in the Introduction.
In particular, we shall deal with Weierstrass points, limit linear series and limits of theta-characteristics, with a view toward subcanonical points.

\subsection*{Notation}

We work throughout over the field $\mathbb{C}$ of complex numbers, unless otherwise stated.
By \emph{curve} we mean a complete connected reduced algebraic curve over $\mathbb{C}$.
We denote by $h^0(C, \mathcal{L})$ the dimension of the space $H^0(C, \mathcal{L})$ of global sections of a line bundle $\mathcal{L}$ on a curve $C$.\\

\subsection{Weierstrass points}\label{subsection WEIERSTRASS POINTS}

Let $C$ be a smooth curve of genus $g$ and let ${L=(\mathcal{L},V)}$ be a $\mathfrak{g}^r_d$ on $C$, that is a line bundle $\mathcal{L}$ of degree $d$ endowed with a $(r+1)$-dimensional subspace ${V\subset H^0(C,\mathcal{L})}$.
As in \cite{EH1}, given a point $p\in C$, the set $\displaystyle \left\{\ord_p(s)\left|s\in V\right.\right\}$ of orders of vanishing of sections of $V$ at $p$ consists of exactly $r+1$ distinct integers ${0\leq a_0^L(p)<a_1^L(p)<\dots<a_r^L(p)\leq d}$, and the sequence ${a^L(p):=\left(a_0^L(p),a_1^L(p),\dots,a_r^L(p)\right)}$ is called the \emph{vanishing sequence} of $L$ at $p$.
The \emph{ramification sequence} of $L$ at $p$ is the sequence ${\alpha^L(p):=\left(\alpha_0^L(p),\alpha_1^L(p),\dots,\alpha_r^L(p)\right)}$ such that $\alpha_i^L(p):=a_i^L(p)-i$ for any $0\leq i\leq r$, and the \emph{weight} of $p$ with respect to $L$ is defined as the integer
\begin{equation*}
w^L(p):=\sum_{i=0}^r \alpha_i^L(p).
\end{equation*}

In the case of the canonical series $K_C:=\left(\omega_C, H^0(C, \omega_C)\right)$, the sequence $\left(k_0(p),\dots,k_{g-1}(p)\right)$ such that $k_i(p):=a_i^{K_C}(p)+1$ coincides with the \emph{sequence of Weierstrass gaps} of $p$, which is the increasing sequence of integers $0\leq n\leq 2g-1$ such that $h^0\left(C, \mathcal{O}_C((n-1)p)\right)=h^0\left(C, \mathcal{O}_C(n p)\right)$.
Moreover, the set of non-gaps $\mathbb{N}\smallsetminus \left\{k_0(p),\dots,k_{g-1}(p)\right\}$ forms a semigroup with respect to addition.
We say that a point $p\in C$ is a \emph{Weierstrass point} of $C$ if its Weierstrass gaps sequence differs from $(1,2,\dots,g)$.
Hence, it follows from the definition that a point $p\in C$ is a subcanonical point if and only if it is a Weierstrass point having $k_{g-1}(p)=2g-1$, i.e. $a_{g-1}^{K_C}(p)=2g-2$ and $\alpha_{g-1}^{K_C}(p)=g-1$.

Finally, let $\mathcal{M}_{g,1}$ denote the moduli space of pointed smooth curves of genus $g$, and let ${\alpha=\left(\alpha_0,\dots,\alpha_{g-1}\right)}$ be the ramification sequence of the canonical series $K_{C}$ at a point $p\in C$, with  $[C,p]\in \mathcal{M}_{g,1}$.
Then every component of the locally closed subset $\displaystyle \mathcal{C}_{\alpha}=\left\{[Y,y]\in \mathcal{M}_{g,1}\left|\alpha^{K_Y}_i(y)=\alpha_i \textrm{ for all } 0\leq i\leq g-1\right.\right\}$ has codimension at most $\displaystyle w^{K_{C}}(p)=\sum_{i=0}^{g-1}\alpha_i$ (see e.g. \cite{EH2}).

\begin{remark}\label{remark EARLY BOUND}
The bound on the codimension of $\mathcal{C}_\alpha$ could lead to an estimate of $\dim \mathcal{G}^r_g$.
According to the semigroup condition, the ramification sequence of minimal weight for a point $[C,p]\in \mathcal{G}^r_g$ is
\begin{equation}\label{equation MINIMAL RAMIFICATION}
\alpha=(\underbrace{0,\dots,0}_{g-r-1},\underbrace{r,\dots,r}_{r},g-1).
\end{equation}
Then the existence of a pair $(C,p)$ having such a ramification sequence would give $\dim \mathcal{G}^r_g=\dim \mathcal{C}_{\alpha}\geq 3g-2-w^{K_C}(p)=2g-1-r^2$.
In \cite{Bu}, it has been proved that the ramification sequences of a general point of $\mathcal{G}^0_g$ and $\mathcal{G}^1_g$ does satisfy (\ref{equation MINIMAL RAMIFICATION}).
For $r=0$, we then get  $\dim \mathcal{G}^0_g\geq  2g-1$, which is actually an equality.
However, in the light of Theorem \ref{equation MINIMAL RAMIFICATION}, the bound above turns out to be no longer sharp as $r\geq 1$ (see also \cite[p. 497]{EH2} and \cite[Remark 2]{Bu}).
\end{remark}

\subsection{Limit linear series}\label{subsection LIMIT LINEAR SERIES}

Following \cite{EH1}, we briefly review the notion of limit linear series on curves of compact type.
In particular, we focus on nodal curves of the form $X=C_1\cup_q C_2$ given by two smooth curves $C_1$ and $C_2$ of genus $g-j$ and $j$ respectively, meeting at a single ordinary node $q$.

A \emph{crude limit} $\mathfrak{g}^r_d$ on $X$ is a collection $L={\left\{\left(\mathcal{L}_{C_1}, V_{C_1}\right),\left(\mathcal{L}_{C_2}, V_{C_2}\right)\right\}}$, where $L_{C_1}:=\left(\mathcal{L}_{C_1}, V_{C_1}\right)$ and $L_{C_2}:=\left(\mathcal{L}_{C_2}, V_{C_2}\right)$ are $\mathfrak{g}^r_d$ on $C_1$ and $C_2$ respectively, such that the vanishing sequences $a^{L_{C_1}}(q)$ and $a^{L_{C_2}}(q)$ satisfy the \emph{compatibility conditions}
\begin{equation}\label{equation COMPATIBILITY CONDITIONS}
a_i^{L_{C_1}}(q) + a_{r-i}^{L_{C_2}}(q)\geq d\quad\textrm{for any } 0\leq i\leq r.
\end{equation}
In particular, given a point $p\in C_2-\{q\}$ (resp. $p\in C_1-\{q\}$), we define the vanishing and the ramification sequences of $L$ at $p$ as ${a^{L}(p):=a^{L_{C_2}}(p)}$ and ${\alpha^{L}(p):=\alpha^{L_{C_2}}(p)}$ (resp. ${a^{L}(p):=a^{L_{C_1}}(p)}$ and ${\alpha^{L}(p):=\alpha^{L_{C_1}}(p)}$).
Moreover, we say that $L$ is a \emph{refined limit} $\mathfrak{g}^r_d$, or simply a \emph{limit} $\mathfrak{g}^r_d$, if all the inequalities in (\ref{equation COMPATIBILITY CONDITIONS}) are equalities.

It is worth recalling that in \cite[Section 3]{EH1} are constructed the scheme of limit $\mathfrak{g}^r_d$ on a curve of compact type $X$, and more generally schemes parameterizing limit $\mathfrak{g}^r_d$ varying on families of pointed curves of compact type.
\begin{remark}\label{remark UPPER SEMI-CONTINUITY}
Let $(X,p)$ be a pointed singular curve of compact type and let $L$ be a limit $\mathfrak{g}^r_d$ as above.
Suppose that the triple $(X,p,L)$ is smoothable, that is there exists a family of smooth pointed curves endowed with a $\mathfrak{g}^r_d$ having $(X,p,L)$ as central fibre.
Denoting by $B$ the base space of the family and by $(X_b,p_b,L_b)$ the fibre over $b\in B$, we have that the vanishing and the ramification sequences of $L_b$ at $p_b\in X_b$ satisfy upper semi-continuity, that is $a_i^{L_b}(p_b)\leq a_i^L(p)$ and $\alpha_i^{L_b}(p_b)\leq \alpha_i^L(p)$ for any $0\leq i\leq r$ and for any $b\in B$.
\end{remark}

We recall that $\mathcal{M}^r_{g}$ is the sublocus of $\mathcal{M}_g$ which parameterizes curves $C$ admitting a theta-characteristic $L$ such that $h^0\left(C,L\right)\geq r+1$ and $h^0\left(C,L\right)\equiv r+1\,(\mathrm{mod}\,2)$.
The following lemma describes `limit theta-characteristics' in terms of limit $\mathfrak{g}^r_{g-1}$ on curves lying in the boundary of $\overline{\mathcal{M}}^r_{g}$ (see \cite[Lemma 2.2]{Fa}).
\begin{lemma}\label{lemma FARKAS LIMIT g^r_(g-1)}
Let $[C_1\cup_q C_2]\in \overline{\mathcal{M}}^r_{g}$.
Then the curve $C_1\cup_q C_2$ possesses a limit $\mathfrak{g}^r_{g-1}$ ${\displaystyle L=\left\{\left(\mathcal{L}_{C_1},V_{C_1}\right),\left(\mathcal{L}_{C_2},V_{C_2}\right)\right\}}$ such that
\begin{equation}\label{equation GENERAL LOCALLY CLOSED}
\mathcal{L}_{C_1}^2\cong \omega_{C_1}(2j\,q)\quad \textrm{and} \quad \mathcal{L}_{C_2}^2\cong \omega_{C_2}\left(2(g-j)q\right),
\end{equation}
where $g-j$ and $j$ are the genus of ${C_1}$ and ${C_2}$, respectively.
\end{lemma}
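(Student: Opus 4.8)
The plan is to start from a point $[C_1\cup_q C_2]\in\overline{\mathcal{M}}^r_g$ and unwind what membership in this boundary locus means in terms of theta-characteristics on the stable curve. By definition of $\mathcal{M}^r_g$, a general nearby smooth curve $C_t$ carries a theta-characteristic $\mathcal{A}_t$ with $h^0(C_t,\mathcal{A}_t)\geq r+1$ and of the prescribed parity. First I would take a one-parameter smoothing $\pi\colon\mathcal{X}\to\Delta$ of $X=C_1\cup_q C_2$ over a disc, with central fibre $X$, and spread out these theta-characteristics into a relative theta-characteristic (a relative spin structure) $\mathcal{A}$ on $\mathcal{X}/\Delta$, after possibly a finite base change. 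The data $(\mathcal{L}_{C_1},V_{C_1})$ and $(\mathcal{L}_{C_2},V_{C_2})$ will be produced as the aspects of the associated limit $\mathfrak{g}^r_{g-1}$ obtained from $\mathcal{A}$, using the Eisenbud--Harris machinery reviewed in Subsection \ref{subsection LIMIT LINEAR SERIES}.

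The key step is the degree bookkeeping that yields \eqref{equation GENERAL LOCALLY CLOSED}. Recall from Eisenbud--Harris that the two aspects $\mathcal{L}_{C_i}$ of a limit $\mathfrak{g}^r_{g-1}$ on $X=C_1\cup_q C_2$ are, up to twisting by multiples of $q$, the restrictions to the components of a suitable line bundle on a semistable model, and they differ precisely by the chosen twist supported at the node. The constraint on the total degree $g-1$ forces the twisting amounts to be $j$ on the $C_1$-side and $g-j$ on the $C_2$-side. I would make this explicit by computing $\deg\mathcal{L}_{C_1}=g-1$ on $C_1$ (of genus $g-j$): since a genuine theta-characteristic on $C_1$ would have degree $g-j-1$, the extra degree $j$ must be concentrated at $q$, so $\mathcal{L}_{C_1}\cong\mathcal{B}_1(jq)$ for some theta-characteristic $\mathcal{B}_1$ on $C_1$. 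Squaring gives $\mathcal{L}_{C_1}^2\cong\mathcal{B}_1^2(2jq)\cong\omega_{C_1}(2jq)$, which is exactly the first relation, and the symmetric computation on $C_2$ gives the second. The parity and the lower bound $h^0\geq r+1$ are inherited through upper semicontinuity (Remark \ref{remark UPPER SEMI-CONTINUITY}), guaranteeing that what we obtain is genuinely a limit $\mathfrak{g}^r_{g-1}$ and not merely a crude one, after refining if necessary.

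The main obstacle I anticipate is justifying that the limit of relative theta-characteristics is again, on each aspect, a twist of a theta-characteristic by the correct multiple of $q$ supported at the node. This is really a statement about how Cornalba's spin structures degenerate on curves of compact type: the limit spin bundle is not a line bundle on $X$ itself but on a semistable model, and one must track how the relation $\mathcal{A}_t^2\cong\omega_{C_t}$ specializes. The cleanest route is to invoke Cornalba's description of the boundary $\overline{\mathcal{M}}^r_g$ via spin curves, under which the central fibre of $\mathcal{A}$ restricts on each smooth component to a theta-characteristic twisted exactly as claimed; combining this with the standard identification of the aspects of the limit series with these restrictions then produces \eqref{equation GENERAL LOCALLY CLOSED} directly. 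Since this is precisely \cite[Lemma 2.2]{Fa}, I would follow Farkas' argument, citing his construction of the limit $\mathfrak{g}^r_{g-1}$ from a smoothing of the spin structure and his verification of the squaring relations.
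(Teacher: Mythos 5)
The paper gives no proof of this lemma at all---it is imported verbatim from \cite[Lemma 2.2]{Fa}---and your reconstruction is essentially Farkas' own argument: smooth the nodal curve, pass (after base change) to the limit of the theta-characteristics as a Cornalba spin structure on the blow-up $C_1\cup_{q'}R\cup_{q''}C_2$, identify the aspects of the limit $\mathfrak{g}^r_{g-1}$ with the component theta-characteristics twisted by $jq$ and $(g-j)q$ respectively, and square; your degree bookkeeping is correct. The one imprecise point is your appeal to upper semicontinuity (Remark \ref{remark UPPER SEMI-CONTINUITY}) to guarantee a \emph{refined} rather than crude limit series---semicontinuity of vanishing sequences has no bearing on refinedness---but this technicality is glossed over equally in the cited source and does not change the approach.
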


\begin{remark}\label{remark LIMIT SUBCANONICAL POINTS}
Given a smooth pointed curve $[Y,y]\in\mathcal{M}_{g,1}$, the point $y\in Y$ is subcanonical if and only if there exists some complete $\mathfrak{g}^r_{g-1}$ $L$ on $Y$ such that $L^2\cong \omega_Y$ and $a_r^L(y)=g-1$.
In the previous lemma, conditions (\ref{equation GENERAL LOCALLY CLOSED}) extend the notion of theta-characteristic to limit $\mathfrak{g}^{r}_{g-1}$ on nodal curves (see also \cite[p. 363]{EH1}).
Analogously, upper semi-continuity gives that for any $[C_1\cup_q C_2,p]\in\overline{\mathcal{G}}^r_{g}$, the curve $C_1\cup_q C_2$ possesses a limit $\mathfrak{g}^r_{g-1}$ $L$ satisfying (\ref{equation GENERAL LOCALLY CLOSED}) and the extra condition $a_r^L(p)=g-1$.
\end{remark}

Let $\Delta_{1,1}\subset \overline{\mathcal{M}}_{g,1}$ denote the irreducible boundary component whose general point $[C\cup_q E,p]$ is such that $C$ has genus $g-1$, $E$ is an elliptic curve and $p\in E$.
In the spirit of \cite[Lemma 2.2]{Bu}, we prove the following lemma, which describes pointed curves on $\Delta_{1,1}$ having a `limit subcanonical point' whose associated `limit theta-characteristic' has $r+1$ sections.
\begin{lemma}\label{lemma LIMIT g^r_(g-1)}
Fix $g\geq 3$ and $r\geq 1$, and let $[C\cup_q E,p]\in \Delta_{1,1}\subset \overline{\mathcal{M}}_{g,1}$.
\begin{itemize}
  \item[(i)] Let $\displaystyle L=\left\{\left(\mathcal{L}_C,V_C\right),\left(\mathcal{L}_E,V_E\right)\right\}$ be a limit $\mathfrak{g}^r_{g-1}$ on $C\cup_q E$ satisfying
      \begin{equation}\label{equation LOCALLY CLOSED}
      \mathcal{L}_{C}^2\cong \omega_{C}(2q)\quad \textrm{and} \quad \mathcal{L}_{E}^2\cong \omega_{E}\left(2(g-1)q\right),
      \end{equation}
      and such that $a_r^L(p)= g-1$.
      Then $q\in C$ is a subcanonical point with $h^0(C,(g-2)q)\geq r$, and $p-q\in E$ is a torsion point of order $2g-2$.
  \item[(ii)] Conversely, if $q\in C$ is a subcanonical point with $h^0(C,(g-2)q)\geq r+1$, and $p-q\in E$ is a torsion point of order exactly $2g-2$,
  then the curve $C\cup_q E$ admits a limit $\mathfrak{g}^r_{g-1}$ ${\displaystyle L=\left\{\left(\mathcal{L}_C,V_C\right),\left(\mathcal{L}_E,V_E\right)\right\}}$ satisfying conditions (\ref{equation LOCALLY CLOSED}) and such that $a_r^L(p)= g-1$.
\end{itemize}
In particular, $\mathcal{L}_C=\mathcal{O}_C((g-1)q)$ and $\mathcal{L}_E=\mathcal{O}_E((g-1)p)$ in both $\mathrm{(i)}$ and $\mathrm{(ii)}$.
\end{lemma}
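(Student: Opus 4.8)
The plan is to exploit the reciprocity between the point $p$ on $E$ and the node $q$ on $C$ encoded in the compatibility conditions (\ref{equation COMPATIBILITY CONDITIONS}), together with Riemann--Roch on each component; throughout one uses $\omega_E\cong\mathcal{O}_E$ and the degree count $\deg\mathcal{L}_C=\deg\mathcal{L}_E=g-1$ forced by (\ref{equation LOCALLY CLOSED}). For part (i), I would first note that the hypothesis $a_r^L(p)=a_r^{L_E}(p)=g-1=\deg\mathcal{L}_E$ produces a section $s_0\in V_E$ with $\ord_p(s_0)=g-1$; since its divisor has degree $g-1$, necessarily $\mathrm{div}(s_0)=(g-1)p$, whence $\mathcal{L}_E\cong\mathcal{O}_E((g-1)p)$. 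Combining this with $\mathcal{L}_E^2\cong\mathcal{O}_E(2(g-1)q)$ gives $\mathcal{O}_E\big((2g-2)(p-q)\big)\cong\mathcal{O}_E$, i.e. $p-q$ is a torsion point annihilated by $2g-2$.

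The heart of (i) is then a cascade through the refined condition. As $p\neq q$, the section $s_0$ does not vanish at $q$, so $a_0^{L_E}(q)=0$; taking $i=r$ in the equality $a_i^{L_C}(q)+a_{r-i}^{L_E}(q)=g-1$ forces $a_r^{L_C}(q)=g-1=\deg\mathcal{L}_C$, and repeating the divisor argument of the previous step now on $C$ yields $\mathcal{L}_C\cong\mathcal{O}_C((g-1)q)$. Feeding this into $\mathcal{L}_C^2\cong\omega_C(2q)$ gives $\mathcal{O}_C((2g-4)q)\cong\omega_C$, so $q$ is a subcanonical point of the genus-$(g-1)$ curve $C$. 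Finally I would extract $h^0(C,(g-2)q)\geq r$ from the vanishing sequence $c_0<c_1<\dots<c_r=g-1$ of $L_C$ at $q$: the $r$ sections realizing $c_1,\dots,c_r$ all vanish at $q$, hence are independent elements of $H^0\big(C,\mathcal{O}_C((g-2)q)\big)$. (The gap between this $r$ and the hypothesis $r+1$ in (ii) reflects the section of order $c_0$, which need not vanish at $q$.)

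For the converse (ii), I would run the construction in reverse. Set $\mathcal{L}_C:=\mathcal{O}_C((g-1)q)$ and $\mathcal{L}_E:=\mathcal{O}_E((g-1)p)$; subcanonicity of $q$ gives $\mathcal{L}_C^2\cong\omega_C(2q)$, and the $(2g-2)$-torsion of $p-q$ gives $\mathcal{L}_E^2\cong\mathcal{O}_E(2(g-1)q)$, so (\ref{equation LOCALLY CLOSED}) holds. Because $p-q$ has order \emph{exactly} $2g-2$, one has $k(p-q)\neq0$ for $1\leq k\leq g-1$, so Riemann--Roch shows the complete series $H^0\big(E,\mathcal{O}_E((g-1)p)\big)$ has vanishing sequence $(0,1,\dots,g-2)$ at $q$. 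The hypothesis $h^0(C,(g-2)q)\geq r+1$ furnishes $r+1$ non-gaps $0=m_0<m_1<\dots<m_r\leq g-2$ of $q$ on $C$, hence an $(r+1)$-dimensional $V_C\subset H^0\big(C,\mathcal{O}_C((g-1)q)\big)$ with vanishing sequence $c_i=g-1-m_{r-i}$ at $q$ (so $c_r=g-1$). Choosing on $E$ sections of orders $m_0,\dots,m_r$ at $q$, with $s_0$ of divisor $(g-1)p$ as the order-$0$ section, defines $V_E$ with $b_j=m_j$ and $a_r^{L_E}(p)=g-1$. Then $c_i+b_{r-i}=g-1$ is immediate, so $L=\{(\mathcal{L}_C,V_C),(\mathcal{L}_E,V_E)\}$ is a refined limit $\mathfrak{g}^r_{g-1}$ with the required properties.

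The main obstacle I anticipate is the bookkeeping of achievable vanishing sequences at $q$ on the two components, and in particular checking that the non-gap orders $m_0,\dots,m_r$ produced on $C$ by $h^0(C,(g-2)q)\geq r+1$ are simultaneously realizable on $E$. This is precisely where the hypothesis that $p-q$ has order exactly $2g-2$ is needed, since it guarantees $(0,1,\dots,g-2)$ as the full elliptic vanishing sequence and hence the availability of every order $m_i\leq g-2$; the remaining verifications that $V_C$ and $V_E$ are honest $(r+1)$-dimensional aspects are routine.
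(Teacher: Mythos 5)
Your proof is correct, and while part (i) follows essentially the same path as the paper (the order-$(g-1)$ section at $p$ identifies $\mathcal{L}_E\cong\mathcal{O}_E((g-1)p)$ and gives the torsion claim; then $a_0^{L_E}(q)=0$ plus compatibility forces $a_r^{L_C}(q)=g-1$, hence $\mathcal{L}_C\cong\mathcal{O}_C((g-1)q)$ and subcanonicity), your part (ii) takes a genuinely different route at the decisive step. The paper builds $V_C$ much as you do, from sections of $\mathcal{O}_C((g-2)q)$ pushed into $H^0(C,\mathcal{O}_C((g-1)q))$, but to produce the aspect on $E$ it invokes the Eisenbud--Harris existence result \cite[Proposition 5.2]{EH2}: it prescribes the complementary sequence $b_i=g-1-c_{r-i}$ at $q$ together with a sequence $(a_i)$ at $p$, and checks the linear-equivalence conditions (\ref{equation EH2}) using that $p-q$ has order exactly $2g-2$. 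You bypass that machinery: since $(g-1)(p-q)\not\sim 0$, Riemann--Roch on $E$ shows the complete series $\left|(g-1)p\right|$ has vanishing sequence $(0,1,\dots,g-2)$ at $q$, so every required order $m_j\leq g-2$ is realized by an honest section, and the section with divisor $(g-1)p$ serves simultaneously as the order-$0$ section at $q$ and the order-$(g-1)$ section at $p$; spanning $V_E$ by sections with distinct orders at $q$ makes its vanishing sequence exactly $(m_0,\dots,m_r)$, and the refined equalities $c_i+b_{r-i}=g-1$ are immediate. Your argument is more elementary and self-contained, and it isolates precisely which consequences of the torsion hypothesis are used, namely $(2g-2)(p-q)\sim 0$ and $(g-1)(p-q)\not\sim 0$; the paper's appeal to \cite[Proposition 5.2]{EH2} buys consistency with the limit-linear-series formalism it relies on again later (the same proposition reappears in the proof of Corollary \ref{corollary SMOOTHING WEIERSTRASS}), at the cost of verifying the hypotheses (\ref{equation EH2}) rather than exhibiting the sections directly.
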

\begin{proof}
In order to prove (i), we note that $L_E=\left(\mathcal{L}_E,V_E\right)$ is a $\mathfrak{g}^r_{g-1}$ on $E$ having vanishing order $a_r^{L_E}(p)=a_r^{L}(p)=g-1$ at $p\in E$.
Thus there exists $\sigma_E\in V_E$ such that $\ord_p(\sigma_E)=g-1$, and hence $\mathcal{O}_E\left((g-1)p\right)\cong \mathcal{L}_E$.
Thanks to conditions (\ref{equation LOCALLY CLOSED}) we have $\mathcal{O}_E\left((2g-2)p\right)\cong \mathcal{L}_E^2\cong \mathcal{O}_E\left((2g-2)q\right)$, that is $p-q\in E$ is a torsion point of order $2g-2$.\\
Since $\ord_p(\sigma_E)=g-1$, we have $\ord_q(\sigma_E)=0$ and hence $a_0^{L_E}(q)=0$.
Then $a_r^{L_C}(q)=g-1$ by compatibility conditions (\ref{equation COMPATIBILITY CONDITIONS}).
In particular, $\mathcal{O}_C\left((g-1)q\right)\cong \mathcal{L}_C$ and $h^0\left(C,(g-1)q\right)\geq \dim V_{C}=r+1$.
Therefore $h^0\left(C,(g-2)q\right)\geq r$ and $\mathcal{O}_{C}\left((2g-2)q\right)\cong \mathcal{L}^{2}_{C}\cong \omega_{C}(2q)$ by (\ref{equation LOCALLY CLOSED}).
Thus $(2g-4)q$ is a canonical divisor of the curve $C$ of genus $g-1$, i.e. $q\in C$ is a subcanonical point and assertion (i) follows.

So we turn to prove (ii).
Let  $\mathcal{L}_C:=\mathcal{O}_C((g-1)q)$ and $\mathcal{L}_E:=\mathcal{O}_E((g-1)p)$.
By assumption, $(2g-2)p$ and $(2g-2)q$ are linearly equivalent divisors on $E$, and $(2g-4)q$ is a canonical divisor on $C$.
Thus $\mathcal{L}_C$ and $\mathcal{L}_E$ satisfy conditions (\ref{equation LOCALLY CLOSED}).
Moreover, the subcanonical point $q\in C$ is such that $h^0(C,(g-2)q)\geq r+1$.
Then there exists ${\sigma\in H^0(C, \mathcal{O}_C((g-2)q))}$ satisfying $\ord_q(\sigma)=g-2$, and we can choose a $(r+1)$-dimensional subspace ${V\subset H^0(C, \mathcal{O}_C((g-2)q))}$ passing through $\sigma$.
We define ${V_C\subset H^0(C, \mathcal{O}_C((g-1)q))}$ as the image of $V$ under the natural map ${H^0(C, \mathcal{O}_C((g-2)q))\hookrightarrow H^0(C, \mathcal{O}_C((g-1)q))}$.
So the linear series $L_C:=\left(\mathcal{L}_C,V_C\right)$ has a base point at $q$, and its vanishing sequence $a^{L_C}(q)=(c_0,\dots,c_r)$ at $q$ is such that ${c_0\geq 1}$ and ${c_r=g-1}$.\\
We consider the sequences $(b_0,\dots,b_r)$ and $(a_0,\dots,a_r)$ defined as $b_i:=g-1-c_{r-i}$ for any $0\leq i\leq r$, $a_r:=g-1$ and $a_{r-i}:= g-2-b_i$ for any $1\leq i\leq r$.
Then \cite[Proposition 5.2]{EH2} assures that if
\begin{equation}\label{equation EH2}
\begin{array}{lcl}
a_{r-i}+ b_i= g-1 & \Rightarrow & a_{r-i}p+ b_iq \sim (g-1)p\\
a_{r-i}p+ (b_i+1)q \sim (g-1)p & \Rightarrow & b_{i}+1= b_{i+1}
\end{array}
\end{equation}
hold for each $i$, there exists a linear series $L_E:=\left(\mathcal{L}_E,V_E\right)$ such that $a^{L_E}(p)=(a_0,\dots,a_r)$ and $a^{L_E}(q)=(b_0,\dots,b_r)$.
Since $p-q$ is a torsion point on $E$ having order exactly $2g-2$, conditions (\ref{equation EH2}) are satisfied.
In particular, $a_r^{L}(p)=a_r=g-1$.
Moreover, $b_i+c_{r-i}=g-1$ for any  $0\leq i\leq r$ by definition, therefore $L$ is a refined limit linear series.
\end{proof}

\subsection{Theta-characteristics and spin curves}\label{subsection THETA-CHARACTERISTICS}

We summarize the  approach of  \cite{Cor} to compactify the moduli space $\mathcal{S}_g$ parameterizing isomorphism classes $[C,\mathcal{L}]$, where $[C]\in \mathcal{M}_g$ and $\mathcal{L}$ is a theta-characteristic on $C.$
To this aim, we consider a semistable curve $X$ having at most ordinary nodes as singularities and arithmetic genus $p_a(X)=g\geq 2$.
An \emph{exceptional component} is a smooth rational curve $R\subset X$ meeting the rest of the curve $X$ at two points.
We say that $X$ is a \emph{decent curve} if every two exceptional components are disjoint.
A \emph{spin curve} is a triple $(X,\xi,\alpha)$ consisting of a decent curve $X$, a line bundle $\xi$ of degree $g-1$ on $X$ such that $\xi_{|R}=\mathcal{O}_R(1)$ for any exceptional component $R\subset X$, and a homomorphism $\alpha\colon \xi^2\longrightarrow \omega_X$ being non-null on every non-exceptional component of $X$.
In particular, when $X$ is a curve of compact type, the restriction $\xi_{|C}$ gives a theta-characteristic on any smooth non-exceptional component $C\subset X$.
Moreover, any decent curve is obtained by blowing-up some nodes of a stable curve $[\widetilde{X}]\in \overline{\mathcal{M}}_g$.
Thus we define the moduli space $\overline{\mathcal{S}}_g$ parameterizing isomorphism classes $[X,\xi,\alpha]$ of spin curves, and there exists a finite map
$$
{\pi\colon \overline{\mathcal{S}}_g\longrightarrow\overline{\mathcal{M}}_g}
$$
sending $[X,\xi,\alpha]$ to the class of the stable model $[\widetilde{X}]$.
\begin{remark}
Consider a stable curve $\widetilde{X}=C_1\cup_q C_2$ whose components are two smooth curves of genus $g-j$ and $j$ meeting at a node $q$.
Then $\widetilde{X}$ does not admit any structure of spin curve, and the fibre $\pi^{-1}[\widetilde{X}]$ consists of classes $[X,\xi,\alpha]\in \overline{\mathcal{S}}_g$, where $X=C_1\cup_{q'}R\cup_{q''}C_2$ is the blow-up of $\widetilde{X}$~at~$q$ (see \cite[Example 3.1]{Cor}).
\end{remark}
A \emph{family of spin curves} is a triple $(\mathcal{X}\stackrel{\phi}{\longrightarrow} T, \xi, \alpha)$ consisting of a family of decent curves $\mathcal{X}\longrightarrow T$, a line bundle $\xi$ on $\mathcal{X}$, and a homomorphism $\alpha\colon \xi^2\longrightarrow \omega_\phi$ such that each triple $(X_t,\xi_t,\alpha_t)$ is a spin curve, where $X_t:=\phi^{-1}(t)$, $\xi_t:=\xi_{|X_t}$ and $\alpha_t:=\alpha_{|\xi_t^2}$.
The following theorem extends to spin curves two classical results on theta-characteristics due to Mumford and Harris, respectively (see \cite{Ha,Mu} and \cite[Theorem 3.3]{Col}).
\begin{theorem}\label{theorem COLOMBO}
Let $\displaystyle (\mathcal{X}\stackrel{\phi}{\longrightarrow} T, \xi, \alpha)$ be a family of spin curves.
Then
\begin{itemize}
  \item[(i)] the function $T\longrightarrow \mathbb{Z}/2\mathbb{Z}$ defined as $t\longmapsto \left[h^0(C_t,\xi_t)\right]_{(\mathrm{mod}\,2)}$ is locally constant;
  \item[(ii)] the locus ${T^r:=\left\{t\in T\left| h^0\left(X_t,\xi_t\right)\geq r+1 \textrm{ and } h^0\left(X_t,\xi_t\right)\equiv r+1 \,(\mathrm{mod}\,2) \right.\right\}}$ is either empty or it has codimension at most ${r+1 \choose 2}$.
\end{itemize}
\end{theorem}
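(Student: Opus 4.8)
\emph{Overall plan.} The strategy is to reduce both statements to the study of a single object on the base: a family of \emph{alternating} (skew-symmetric) bilinear forms whose corank computes $h^0$. Since the fibres of $\phi$ are curves, $R\phi_*\xi$ is a perfect complex of amplitude $[0,1]$ on $T$. After shrinking $T$ (or working locally in the analytic topology), I would represent it by a two-term complex of vector bundles $E^0\xrightarrow{\psi}E^1$, so that cohomology and base change give $\ker\psi_t=H^0(X_t,\xi_t)$ and $\mathrm{coker}\,\psi_t=H^1(X_t,\xi_t)$ for every $t$; as $\deg\xi_t=g-1$ forces $\chi(\xi_t)=0$, the bundles $E^0,E^1$ share a common rank $N$. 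The defining homomorphism $\alpha\colon\xi^2\to\omega_\phi$ together with relative Grothendieck--Serre duality yields a self-duality $(R\phi_*\xi)^\vee\cong R\phi_*\xi\,[1]$. Because this shift is \emph{odd}, the Koszul sign rule turns the induced identification $E^1\cong(E^0)^*$ into an \emph{alternating} pairing, so each $\psi_t\colon E^0_t\to(E^0_t)^*$ is skew-symmetric of size $N$ and $h^0(X_t,\xi_t)=\mathrm{corank}(\psi_t)$.

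\emph{Part (i).} An alternating form has even rank, hence $\mathrm{corank}(\psi_t)\equiv N\pmod 2$ for every $t$. Since $N=\mathrm{rank}\,E^0$ is constant on each connected component of $T$, the function $t\mapsto[h^0(X_t,\xi_t)]_{(\mathrm{mod}\,2)}$ is locally constant. This is precisely Mumford's parity theorem \cite{Mu} in the present language, extended to families of spin curves in \cite[Theorem 3.3]{Col}.

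\emph{Part (ii).} By construction $T^r=\{t\in T\mid \mathrm{corank}(\psi_t)\geq r+1,\ \mathrm{corank}(\psi_t)\equiv r+1\,(\mathrm{mod}\,2)\}$. Part (i) shows the parity of the corank is constant on each component; thus $T^r$ meets only the components where $N\equiv r+1\pmod 2$, and on these the second condition is automatic once the first holds, so locally $T^r=\{\mathrm{corank}(\psi_t)\geq r+1\}$. This is the degeneracy locus of the family of alternating $N\times N$ forms $\psi$ where the rank drops to at most $N-(r+1)$, an even bound since $N\equiv r+1\pmod 2$. Invoking the classical codimension estimate for such Pfaffian strata---the locus of alternating $N\times N$ forms of corank at least $c$ has codimension $\binom{c}{2}$---with $c=r+1$ gives $\mathrm{codim}_T T^r\leq\binom{r+1}{2}$, which recovers Harris' bound \cite{Ha} and is the assertion.

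\emph{Main obstacle.} The delicate point is not the linear-algebra count but the construction of the self-dual alternating complex \emph{uniformly over $T$, including the boundary fibres}. Over the smooth locus the argument above is standard, but for a family of spin curves the homomorphism $\alpha$ is only required to be non-null on non-exceptional components: on each exceptional $R\cong\mathbb{P}^1$ one has $\omega_X|_R=\mathcal{O}_R$ while $\xi^2|_R=\mathcal{O}_R(2)$, and $\mathrm{Hom}(\mathcal{O}(2),\mathcal{O})=0$ forces $\alpha|_R=0$, so $\alpha$ is not an isomorphism at decent nodal fibres. One must therefore verify, following Cornalba's analysis of decent curves \cite{Cor}, that relative duality still yields the correct $(-1)$-symmetric self-pairing and that $\ker\psi_t$ computes $h^0(X_t,\xi_t)$ at every reducible fibre. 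Granting this---which is the substance of \cite[Theorem 3.3]{Col}---both assertions follow formally from the skew-symmetry of $\psi$ and the Pfaffian degeneracy-locus estimate.
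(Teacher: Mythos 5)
First, a point of comparison: the paper does not prove this statement at all. Theorem \ref{theorem COLOMBO} is quoted as known, with \cite{Mu,Ha} covering smooth curves and \cite[Theorem 3.3]{Col} covering the extension to Cornalba's spin curves, so your proposal has to be measured against that cited literature rather than against an in-paper argument. Over the smooth fibres your sketch is correct in outline and is a known reformulation of the classical proof: the sign analysis is right (for a complex of amplitude $[0,1]$, a symmetric pairing into $\mathcal{O}_T[-1]$ has degree-two piece $\Lambda^2 E^1$ rather than $\mathrm{Sym}^2 E^1$, so a strict model is an alternating map $\psi\colon E^0\to (E^0)^*$), and once such a strict model exists, (i) follows from evenness of the rank of alternating forms and (ii) from the standard bound for Pfaffian degeneracy loci; your observation that the parity condition in (ii) just selects the components with $N\equiv r+1\,(\mathrm{mod}\,2)$ is also handled correctly. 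What you do not supply is the strictification. Grothendieck duality gives $(R\phi_*\xi)^\vee\cong R\phi_*\xi[1]$ only in the derived category, and producing a representative on which this becomes an honest alternating bundle map whose fibrewise kernel is $H^0(X_t,\xi_t)$ is the real technical content; a naive residue pairing on a two-term resolution does \emph{not} satisfy the required compatibility. The classical route (Mumford, Harris) builds the strict model by hand: choose a relative effective divisor $D$ of large degree and suitable parity, put the residue quadratic form on $\phi_*\left(\xi(D)|_{2D}\right)$, realize $H^0(X_t,\xi_t)$ as the fibrewise intersection of the two Lagrangian subbundles $\phi_*(\xi(D))$ and the sections vanishing on $D$, and then recover your alternating $\psi$ by writing one Lagrangian as a graph over the other. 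Your one-sentence appeal to the Koszul rule conceals exactly this construction.

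The genuine gap is at the boundary fibres, and it is precisely the part this paper needs. As you correctly note, $\alpha_{|R}=0$ on every exceptional component, so the spin homomorphism is not an isomorphism and your duality argument collapses there; at that point you ``grant'' \cite[Theorem 3.3]{Col} --- but that theorem \emph{is} the statement being proven, so the argument becomes circular exactly where it goes beyond Mumford--Harris. This is not a harmless corner case: the paper applies Theorem \ref{theorem COLOMBO} to versal families in $\overline{\mathcal{S}}_{g,1}$ whose central fibre is a blown-up nodal curve (Claim \ref{claim SMOOTHING THETA} and the estimate $\dim B^r\geq 3g-2-\binom{r+1}{2}$ in the proof of Theorem \ref{theorem INDUCTION}), and part (i) is invoked across $\Delta_{1,1}$ in Claim \ref{claim LIMIT W} to conclude that ``even and odd theta-characteristics do not mix.'' To close the gap one must redo the local model on decent curves, for instance using that for $X=C_1\cup_{q'}R\cup_{q''}C_2$ restriction gives $H^0(X,\xi)\cong H^0\left(C_1,\xi_{|C_1}\right)\oplus H^0\left(C_2,\xi_{|C_2}\right)$, since a section of $\xi_{|R}=\mathcal{O}_R(1)$ is determined by its values at $q'$ and $q''$, and then building the orthogonal (or alternating) presentation componentwise and compatibly with smoothing; this is Cornalba's and Colombo's actual contribution, not a formal consequence of the smooth-fibre setup.
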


In particular, the locus
\begin{equation*}
\mathcal{S}^r_g:=\left\{[C,\mathcal{L}]\in \mathcal{S}_g\left| h^0\left(C,\mathcal{L}\right)\geq r+1 \textrm{ and } h^0\left(C,\mathcal{L}\right)\equiv r+1\,(\mathrm{mod}\,2) \right.\right\}
\end{equation*}
has codimension at most ${r+1 \choose 2}$ in $\mathcal{S}_g$, according to the identity $\mathcal{M}^r_g=\pi\left(\mathcal{S}^r_g\right)$.

Since we shall mainly deal with pointed curves, let $\mathcal{S}_{g,1}$ denote the moduli space of pointed smooth spin curves, and let $\mathcal{S}^r_{g,1}$ be the pre-image of $\mathcal{S}^r_g$ under the forgetful morphism ${\mathcal{S}_{g,1}\longrightarrow\mathcal{S}_g}$.
We recall that a theta-characteristic $\mathcal{L}$ on a curve $C$ is said to be \emph{odd} (resp. \emph{even}) if $h^0\left(C,\mathcal{L}\right)$ is.
Moreover, the locus $\Delta_{1,1}\subset \overline{\mathcal{M}}_{g,1}$ denotes the boundary divisor parameterizing nodal curves having an elliptic tail with a marked point.
Then we define the irreducible boundary components $A_{1,1}^+$ and $A_{1,1}^-$ of $\overline{\mathcal{S}}_g$ as
\begin{equation*}
A_{1,1}^+:=\overline{\left\{[X,\xi,\alpha,p]\in \overline{\mathcal{S}}_g\left| \begin{array}{l}
\pi\left([X,\xi,\alpha,p]\right)=[C\cup_q E,p]\in \Delta_{1,1},\\
\xi_{|C} \textrm{ and } \xi_{|E} \textrm{ are even}
\end{array}\right.\right\}}
\end{equation*}
and
\begin{equation*}
A_{1,1}^-:=\overline{\left\{[X,\xi,\alpha,p]\in \overline{\mathcal{S}}_g\left| \begin{array}{l}
\pi\left([X,\xi,\alpha,p]\right)=[C\cup_q E,p]\in \Delta_{1,1},\\
\xi_{|C} \textrm{ is odd and } \xi_{|E} \textrm{ is even}
\end{array}\right.\right\}}.
\end{equation*}
Hence the partial compactification $\mathcal{S}^*_g:=\mathcal{S}_g\cup A_{1,1}^+\cup A_{1,1}^-$ maps finitely over $\mathcal{M}_g\cup \Delta_{1,1}$ through $\pi$.
Finally, we have the following result (see \cite[Proposition 2.4]{Fa}).
\begin{lemma}\label{lemma FARKAS SMOOTHING THETA}
Let $[C\cup_q E,p]\in \Delta_{1,1}$ such that $[C,q]\in \mathcal{G}^r_{g-1}$ and $p-q\in E$ is a torsion point of order exactly $2g-2$.
Let $[X,\xi,\alpha]\in \overline{\mathcal{S}}_{g}$ be the spin curve such that $X=C\cup_{q'}R\cup_{q''}E$ is the blow up of $C\cup_q E$ at q, $\xi_{|C}=\mathcal{O}_C\left((2g-4)q'\right)$ and $\xi_{|E}=\mathcal{O}_E\left((g-1)(p-q'')\right)$.
If $[C,\xi_{|C}]\in \mathcal{S}_{g-1}$ lies on an irreducible component of $\mathcal{S}_{g-1}^r$ having codimension ${r+1 \choose 2}$ in $\mathcal{S}_{g-1}$, then $[X,\xi,\alpha]\in \overline{\mathcal{S}}_{g}^r$ and $[C\cup_q E]\in \overline{\mathcal{M}}_{g}^r$.
\end{lemma}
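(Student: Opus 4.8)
The plan is to exhibit $[X,\xi,\alpha]$ as the central fibre of a versal family of spin curves and to compare the codimension of the locus $\{h^0\geq r+1\}$ with that of its trace on the boundary, the whole argument resting on the hypothesis that $[C,\xi_{|C}]$ lies on a component of $\mathcal{S}^r_{g-1}$ of the \emph{smallest} codimension allowed by Theorem~\ref{theorem COLOMBO}(ii).

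First I would check that $[X,\xi,\alpha]$ itself satisfies the defining conditions of $\mathcal{S}^r_g$. Since $p-q''$ has order exactly $2g-2$, the bundle $\xi_{|E}=\mathcal{O}_E((g-1)(p-q''))$ is a non-trivial $2$-torsion point of $\mathrm{Pic}^0(E)$, hence an even theta-characteristic on $E$ with $h^0(E,\xi_{|E})=0$. Writing a global section of $\xi$ on $X=C\cup_{q'}R\cup_{q''}E$ as a compatible triple of sections on $C$, $R\cong\mathbb{P}^1$ and $E$, the $E$-component must vanish because $h^0(E,\xi_{|E})=0$, while $\xi_{|R}=\mathcal{O}_R(1)$ interpolates freely between prescribed values at $q'$ and $q''$; the standard exact sequence on the nodal curve then yields $h^0(X,\xi)=h^0(C,\xi_{|C})$. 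By hypothesis $[C,\xi_{|C}]\in\mathcal{S}^r_{g-1}$, so $h^0(X,\xi)=h^0(C,\xi_{|C})\geq r+1$ and $h^0(X,\xi)\equiv r+1\,(\mathrm{mod}\,2)$. In particular $[X,\xi,\alpha]$ lies in the locus cut out by these conditions.

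Next I would take a versal deformation $\phi\colon\mathcal{X}\to T$ of the spin curve $[X,\xi,\alpha]$ and set $T^r:=\{t\in T: h^0(X_t,\xi_t)\geq r+1,\ h^0(X_t,\xi_t)\equiv r+1\,(\mathrm{mod}\,2)\}$, so that the base point $o\in T$ lies in $T^r$ by the previous step. By Theorem~\ref{theorem COLOMBO}(ii) every irreducible component of $T^r$ has codimension at most $\binom{r+1}{2}$ in $T$. I would then bound the trace of $T^r$ on the boundary from the other side. The boundary $\partial T\subset T$ of singular spin curves is a divisor, $\codim_T\partial T=1$, and near $o$ it parameterizes blown-up elliptic-tail spin curves whose restriction to the elliptic tail stays even, by local constancy of the parity (Theorem~\ref{theorem COLOMBO}(i)). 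Hence the section count of the first step applies along $\partial T$, and a boundary spin curve lies in $T^r$ exactly when its genus $(g-1)$ datum lies in $\mathcal{S}^r_{g-1}$. Since $[C,\xi_{|C}]$ sits on a component of $\mathcal{S}^r_{g-1}$ of codimension exactly $\binom{r+1}{2}$ in $\mathcal{S}_{g-1}$ (the node $q'$ and the modulus of the tail varying freely), this condition has codimension $\binom{r+1}{2}$ in $\partial T$, so $\codim_T(T^r\cap\partial T)=1+\binom{r+1}{2}$. Consequently no component of $T^r$ can be contained in $\partial T$, as such a component would have codimension at least $1+\binom{r+1}{2}$ in $T$, contradicting Theorem~\ref{theorem COLOMBO}(ii). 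In particular any component $Z\subset T^r$ through $o$ meets the interior, its general point is a smooth spin curve in $\mathcal{S}^r_g$, and therefore $o\in Z\subset\overline{\mathcal{S}}^r_g$. Finally, applying the finite (hence closed) morphism $\pi$ and using $\pi(\mathcal{S}^r_g)=\mathcal{M}^r_g$, one gets $[C\cup_q E]=\pi([X,\xi,\alpha])\in\pi(\overline{\mathcal{S}}^r_g)\subset\overline{\mathcal{M}}^r_g$.

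The crux, and the main obstacle, is the tightness of this comparison: the two relevant codimensions differ by exactly $1$, the codimension of the boundary divisor, so the argument collapses unless $\mathcal{S}^r_{g-1}$ realizes its \emph{minimal} codimension $\binom{r+1}{2}$ at $[C,\xi_{|C}]$ -- a larger codimension there would permit $T^r$ to hide entirely in $\partial T$. The delicate technical points are thus to justify that Theorem~\ref{theorem COLOMBO}(ii) bounds the codimension of \emph{every} component of $T^r$ (so that it applies to a hypothetical component inside $\partial T$), and to identify the local boundary of the versal family with the single even elliptic-tail stratum $A^{+}_{1,1}$ or $A^{-}_{1,1}$ of the asserted codimension, so that the section count $h^0(X_t,\xi_t)=h^0(C_t,\xi_{t|C_t})$ remains valid throughout it.
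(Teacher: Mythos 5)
Your route is exactly the one the paper takes: the paper does not reprove this lemma but defers to the proof of \cite[Proposition 2.4]{Fa}, which is precisely your scheme --- a section count on the blown-up nodal spin curve, followed by a versal deformation and a comparison of the Harris--Colombo codimension bound with the codimension of the boundary trace. Your first step is correct ($\xi_{|E}$ is a non-trivial $2$-torsion bundle, so $h^0(E,\xi_{|E})=0$, sections die on $E$, interpolate uniquely on $R$, and $h^0(X,\xi)=h^0(C,\xi_{|C})$ with the right parity), and so is your identification of the local boundary with even-elliptic-tail spin curves via parity constancy, with $\codim_T\partial T=1$ and $2$-dimensional fibres for $\sigma\colon \partial T\to \mathcal{S}_{g-1}$.

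The gap is at the pivotal claim that $\codim_T\left(T^r\cap \partial T\right)=1+\binom{r+1}{2}$, hence that \emph{no} component of $T^r$ can lie inside $\partial T$. Near $o$ the trace $T^r\cap\partial T$ equals $\sigma^{-1}\left(\mathcal{S}^r_{g-1}\right)$, the preimage of the \emph{whole} locus $\mathcal{S}^r_{g-1}$, not just of the component $W$ provided by the hypothesis. Theorem \ref{theorem COLOMBO}(ii) bounds codimensions from \emph{above} by $\binom{r+1}{2}$, so it does not exclude that $[C,\xi_{|C}]$ also lies on a second component $W'$ of $\mathcal{S}^r_{g-1}$ of codimension strictly smaller than $\binom{r+1}{2}$; then $\sigma^{-1}(W')$ has codimension at most $\binom{r+1}{2}$ in $T$ and could be a component of $T^r$ contained entirely in $\partial T$, so your claimed equality and the ensuing contradiction both fail. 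Your closing discussion actually has this danger backwards: you call $\binom{r+1}{2}$ the \emph{smallest} codimension allowed and worry about a \emph{larger} one, whereas by Theorem \ref{theorem COLOMBO}(ii) it is the largest allowed value, and the enemy is a bigger-dimensional (smaller-codimension) component of $\mathcal{S}^r_{g-1}$ through the same point. There are two ways to close the gap. (a) Assume $[C,\xi_{|C}]$ general in $W$, as Farkas does and as happens in the paper's application (in Claim \ref{claim SMOOTHING THETA} the point $[C,q]$ is general in $\mathcal{Z}_{g-1}$); then locally $\mathcal{S}^r_{g-1}=W$ and your trace computation is valid. (b) Keep the literal hypothesis but argue only about the component $Z$ of $T^r$ containing a component of $\sigma^{-1}(W)$ through $o$: if $Z\subset\partial T$, then $\overline{\sigma(Z)}$ is an irreducible closed subset of $\mathcal{S}^r_{g-1}$ containing a nonempty open subset of $W$, hence containing $W$, hence equal to $W$ by maximality of irreducible components; this forces $\dim Z\leq \dim W+2=\dim T-\binom{r+1}{2}-1$, contradicting Theorem \ref{theorem COLOMBO}(ii). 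That particular $Z$ therefore meets the interior, and your final paragraph (density of $Z\smallsetminus\partial T$ in $Z$, then the finite map $\pi$) goes through verbatim.
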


\begin{remark}
Lemma \ref{lemma FARKAS SMOOTHING THETA} is actually included in the proof of \cite[Proposition 2.4]{Fa}, under slightly different assumptions.
In particular, the point $q$ is assumed to be general on both $C$ and $E$, and $\xi_{|E}=\mathcal{O}_E(t-s)$ for some torsion point of order two $t-s\in E$.
However, these differences do not affect the proof of Lemma \ref{lemma FARKAS SMOOTHING THETA}.
\end{remark}

\section{Examples}\label{section EXAMPLES}

This section is mainly devoted to describe the loci $\mathcal{G}^2_6$ and $\mathcal{G}^3_9$.
On one hand, they provide the first step of the inductive argument we shall perform in the next section.
On the other hand, we shall see that $\mathcal{G}^3_9$ is reducible, and we shall describe a component satisfying strict inequality in Theorem \ref{theorem BOUND}.
Moreover, we shall briefly discuss the locus $\mathcal{G}^3_{10}$, which possesses two distinct irreducible component having the `expected dimension'.

\begin{example}[$r=2$ and $g=6$]\label{example G_6^2}
It follows from the detailed analysis in \cite[Section 4.5]{Bu} that $\mathcal{G}^2_6\subset \mathcal{M}_{6,1}$ is an irreducible non-empty locus of dimension $10$, and for any point $[C,p]\in \mathcal{G}^2_6$, the canonical ramification sequence at $p$ is $\alpha^{K_C}(p)=(0,0,0,2,2,5)$.
In particular, both the dimension of $\mathcal{G}^2_6$ and the weight of $\alpha^{K_C}(p)$ reach the smallest possible values in view of Theorem \ref{theorem BOUND} and Remark \ref{remark EARLY BOUND}.

We note further that each point $[C,p]\in\mathcal{G}^2_6$ is given by a smooth curve $C\in \mathbb{P}^2$ of degree $d=5$ having a 5-fold inflection point at $p\in C$, that is there exists a line $\ell\in \mathbb{P}^2$ such that  $(C\cdot \ell)_p=5$.
Conversely, given such a quintic plane curve $C$, we have that $\omega_C\cong \mathcal{O}_C(2)$ and $|\mathcal{O}_C(1)|$ is the unique $\mathfrak{g}^2_5$ on $C$, so that any 5-fold inflection point $p\in C$ is subcanonical with $h^0(C,5p)=3$.
\end{example}

In the following examples we turn to describe $\mathcal{G}^3_9$.
We construct two distinct components $\mathcal{Z}_1,\mathcal{Z}_2\subset \mathcal{G}^3_9$ of dimension $\dim \mathcal{Z}_1=14$ and $\dim \mathcal{Z}_2=15$, whose general points $[C,p]$ satisfy ${\alpha^{K_C}(p)=(0,\dots,0,3,3,3,8)}$ and $\alpha^{K_C}(p)=(0,0,0,1,1,3,3,5,8)$, respectively.
In particular, the component $\mathcal{Z}_1$ satisfies equality in Theorem \ref{theorem BOUND}, and the canonical sequence at its general point has minimal weight.
On the other hand, the component $\mathcal{Z}_2$ exceeds the `expected dimension'.

\begin{example}[$r=3$ and $g=9$]\label{example G_9^3 minimal}
Let $\mathcal{Z}_1\subset \mathcal{M}_{9,1}$ be the locus whose general point $[C,p]$ is such that $C=S\cap T\subset \mathbb{P}^3$ is the complete intersection of a smooth quadric $S$ and a smooth quartic $T$, where $p\in C$ is an inflection point of order $8$, i.e. there exists a plane $H\subset \mathbb{P}^3$ satisfying $(C\cdot H)_p=8$.

By adjunction formula, any such a curve $C$ has genus 9 and $\omega_C\cong \mathcal{O}_C(2)$, so that $p\in C$ is a subcanonical point.
It follows from the description of \cite[p. 199-200]{ACGH} that the sequence of Weierstrass gaps of $p$ is either $(1,\dots,5,9,10,11,17)$ or $(1,2,3,5,6,9,10,13,17)$.
If it were the latter, then $|4p|$ would be the $\mathfrak{g}^1_4$ obtained by projecting $C$ from its tangent line $\ell$ at $p$.
Therefore $H\cap S=2\ell$ and $S$ would be a quadric cone, contradicting the assumption of smoothness.
Thus the sequence of Weierstrass gaps at $p$ is the former and the corresponding canonical ramification sequence is $\alpha^{K_C}(p)=(0,\dots,0,3,3,3,8)$.
In particular, $h^0(C, 8p)=4$ and hence $[C,p]\in \mathcal{G}^3_9$.

The existence of smooth curves as above can be proved as follows.
We fix in $\mathbb{P}^3$ a smooth quadric $S$ and a plane $H$ such that $D:=S\cap H$ is a smooth curve.
Then $S\cong \mathbb{P}^1\times \mathbb{P}^1$ and $D$ has bidegree $(1,1)$ on $S$.
In particular, $D$ is a smooth plane conic, so that $D\cong \mathbb{P}^1$.
Thus $\mathcal{O}_S(4)\cong \mathcal{O}_{\mathbb{P}^1\times \mathbb{P}^1}(4,4)$ and $\mathcal{O}_D(4)\cong \mathcal{O}_{\mathbb{P}^1}(8)$.
Moreover, the restriction maps $H^0\left(\mathbb{P}^3, \mathcal{O}_{\mathbb{P}^3}(4)\right)\longrightarrow H^0\left(S, \mathcal{O}_{S}(4)\right)$ and
$H^0\left(\mathbb{P}^1\times \mathbb{P}^1, \mathcal{O}_{\mathbb{P}^1\times \mathbb{P}^1}(4,4)\right)\longrightarrow H^0\left(\mathbb{P}^1, \mathcal{O}_{\mathbb{P}^1}(8)\right)$ turn out to be surjective.
Hence all the effective divisors of degree $8$ on $D$ are cut out by curves of bidegree $(4,4)$ on $S$.
Therefore, fixing a point $p\in D$, there exists some curve $C\subset S$ of bidegree $(4,4)$ meeting $D$ at $p$ with multiplicity $8$, so that $(C\cdot H)_p=8$ as well.
Furthermore, the surjections above show that $C$ must be the complete intersection of $S$ with some quartic surface $T\subset \mathbb{P}^3$.
Finally, let ${C\subset \mathbb{P}^3}$ be the complete intersection of $S:=\{xz-y^2+t^2=0\}$ and $T:=\{x^4+x^3z-x^2y^2+x^2yz-xy^3+x^2z^2+xy^2z-2y^4+xyz^2-y^3z+xz^3-y^2z^2+tz^3+t^4=0\}$.
By direct computation, it is possible to check that $C$, $S$ and $T$ are smooth.
Moreover, the plane $H:=\{t=0\}$ and the point $p:=(0:0:1:0)$ are such that $(C\cdot H)_p=8$.
Hence the linear subsystem $|V|\subset |\mathcal{O}_S(4)|$ of curves $C\subset S$ satisfying the linear condition $(C\cdot H)_p\geq 8$ is non-empty, and its general element is smooth by Bertini's Theorem.
In particular, the locus $\mathcal{Z}_1$ is non-empty as well.

To conclude, we point out that $(C\cdot H)_p\geq 8$ imposes exactly 8 independent linear conditions on $|\mathcal{O}_S(4)|$.
Thus the dimension of $\mathcal{Z}_1$ is
\begin{eqnarray*}
\dim \mathcal{Z}_1 & = & \underbrace{{2+3 \choose 3}-1}_{\textrm{choice of }S}+\underbrace{{4+3 \choose 3}-{2+3 \choose 3}-1}_{\textrm{choice of }T_{|S} }+\underbrace{1}_{\textrm{choice of }p}+\underbrace{3}_{\textrm{choice of }H} \\
& & -\underbrace{8}_{(C\cdot H)_p\geq 8}-\underbrace{15}_{\mathrm{PGL}(4)}= 14.
\end{eqnarray*}
\end{example}

\begin{example}[$r=3$ and $g=9$]\label{example G_9^3 maximal}
We define $\mathcal{Z}_2\subset \mathcal{M}_{9,1}$ as the locus of pointed curves $[C,p]$ such that $C=S\cap T\subset \mathbb{P}^3$ is the complete intersection of a quadric cone $S$ and a smooth quartic $T$, and there exists a line of the ruling $\ell\subset S$ satisfying $(C\cdot \ell)_p=4$.

By arguing as above, it is possible to check that $\mathcal{Z}_2$ is a non-empty component of $\mathcal{G}^3_9$ having $\dim \mathcal{Z}_2=15$ and $\alpha^{K_C}(p)=(0,0,0,1,1,3,3,5,8)$ at its general point.
Furthermore, the component $\mathcal{Z}_1$ is not contained in $\mathcal{Z}_2$ because of upper semi-continuity of ramification sequences.
\end{example}

To conclude, we briefly describe also the locus $\mathcal{G}^3_{10}$, which is reducible and contains two irreducible components satisfying equality in Theorem \ref{theorem BOUND}.
\begin{example}[$r=3$ and $g=10$]\label{example G_10^3}
Let $C= \Gamma_1\cap \Gamma_2\subset \mathbb{P}^3$ be a smooth curve obtained as complete intersection of two smooth cubic surfaces $\Gamma_1$ and $\Gamma_2$.
Assume that $p\in C$ is an inflection point of order 9, and let $H\subset \mathbb{P}^3$ be the plane such that $(C\cdot H)_p=9$.
By adjunction formula, the curve $C$ has genus 10 and $\omega_C\cong \mathcal{O}_C(2)$, so that $p\in C$ is a subcanonical point and $[C,p]\in \mathcal{G}^3_{10}$.
Then the configurations of the cubic plane sections $C_1:=\Gamma_1\cap H$ and $C_2:=\Gamma_2\cap H$ turn out to distinguish two non-empty components $\mathcal{Z}_1,\mathcal{Z}_2\subset\mathcal{G}^3_{10}$ of dimension $\dim \mathcal{Z}_1=\dim \mathcal{Z}_2= 2g-4=16$, as follows:
\begin{itemize}
  \item[(i)] the general point $[C,p]\in \mathcal{Z}_1$ is such that the plane cubic $C_1$ possesses a 3-inflection point at $p$, and the canonical ramification sequence is $\alpha^{K_C}(p)=(0,\dots,0,1,3,3,4,9)$;
  \item[(ii)] the general point $[C,p]\in \mathcal{Z}_2$ is such that the point $p-O$ on the elliptic curve $C_1$ is a torsion point of order 9 for some 3-inflection point $O\in C_1$, and the canonical ramification sequence is ${\alpha^{K_C}(p)=(0,\dots,0,3,3,3,9)}$.
\end{itemize}
\end{example}

\begin{remark}
We point out that, after a detailed analysis, it should be possible to argue as in Example \ref{example G_9^3 minimal} and to construct components of $\mathcal{G}^4_{13}$ and $\mathcal{G}^5_{17}$ having the `expected dimensions' $2g-7$ and $2g-11$, respectively.
As it shall be clear in the next section, these constructions would work as base cases of our inductive argument, so that Theorem \ref{theorem r=3} would be extended to both $\mathcal{G}^4_{g}$ and $\mathcal{G}^5_{g}$.\\
When $r=4$, the component $\mathcal{Z}\subset \mathcal{G}^4_{13}$ is described by complete intersections $C=S_1\cap S_2\cap T\subset \mathbb{P}^4$ of two smooth quadric hypersurfaces $S_1, S_2$ and a cubic hypersurface $T$.
When $r=5$, the component $\mathcal{Z}\subset \mathcal{G}^4_{13}$ consists instead of complete intersections $C=S_1\cap \dots\cap S_4\subset \mathbb{P}^5$ of four smooth quadrics.
\end{remark}

\section{Loci of subcanonical points}\label{section LOCI OF SUBCANONICAL POINTS}

This section concerns our main results on the subcanonical loci $\mathcal{G}_{g}^r$.
In particular, after proving Theorem \ref{theorem BOUND}, we shall present a smoothing theorem, which is the core of the inductive argument to achieve Theorems \ref{theorem r=2} and \ref{theorem r=3}.
Finally, we shall deduce some further related results.

\begin{proof}[Proof of Theorem \ref{theorem BOUND}]
Suppose that $\mathcal{G}_{g}^r$ is non-empty and let ${[C,p]\in \mathcal{G}_{g}^r}$, so that ${p\in C}$ is a subcanonical point and ${N:=\mathcal{O}_C\left((g-1)p\right)}$ is a theta-characteristic.
We want to prove that any irreducible component $\mathcal{Z}\subset \mathcal{G}_{g}^r$ passing through $[C,p]$ has dimension ${\dim \mathcal{Z} \geq  2g-\frac{r(r-1)}{2}-1}$.
To this aim, let ${\left(\mathcal{C}\stackrel{\phi}{\longrightarrow} U, \mathcal{N},  U\stackrel{\rho}{\longrightarrow} \mathcal{C}\right)}$ be a versal deformation of the pointed smooth curve $(C,N,p)$ in $\mathcal{S}_{g,1}$.
Therefore ${U/\Aut(C,N,p)\hookrightarrow \mathcal{S}_{g,1}}$, the family ${\phi\colon \mathcal{C}\longrightarrow U}$ consists of smooth curves $C_t:=\phi^{-1}(t)$ of genus $g$, the line bundle $\mathcal{N}$ on $\mathcal{C}$ is such that ${N_t:=\mathcal{N}_{|C_t}}$ is a theta characteristic on $C_t$, the map ${\rho\colon U\longrightarrow \mathcal{C}}$ is a section of $\phi$ with ${p_t:=\rho(t)\in C_t}$, and $(C_{0},N_{0},p_{0})=(C,N,p)$ for some point $0\in U$.

Then we consider the $(g-1)$-fold relative symmetric product ${\mathcal{C}^{(g-1)}\longrightarrow U}$ of the family $\mathcal{C}$, so that the fibre over each $t$ is the $(g-1)$-fold symmetric product $C_t^{(g-1)}$ of the curve $C_t$.
Setting ${U^r:=\left\{t\in U\left|[C_t,N_t,p_t]\in \mathcal{S}_{g,1}^r\right.\right\}}$, we define two subvarieties of $\mathcal{C}^{(g-1)}$ as follows.
The first one is
\begin{equation*}
{\mathcal{Y}:=\left\{\left. p_1+\dots+p_{g-1}\in C_t^{(g-1)}\right|\,t\in U^r \textrm{ and } \mathcal{O}_{C_t}\left(p_1+\dots+p_{g-1}\right)\cong N_t\right\}},
\end{equation*}
which consists of effective divisors $D_t\in C_t^{(g-1)}$ that induce the theta-characteristic $N_t$ on $C_t$ having at least $r+1$ global sections and the same parity as $r+1$.
The second subvariety is ${\mathcal{P}:=\left\{\left.p_t+\dots+p_t\in C_t^{(g-1)}\right|t\in U\right\}}$, which is the image of the map $\rho^{(g-1)}\colon U\longrightarrow \mathcal{C}^{(g-1)}$ induced by $\rho$.
Therefore, for any $t\in U$ such that ${p_t+\dots+p_t\in\mathcal{P}\cap\mathcal{Y}}$, the line bundle ${\mathcal{O}_{C_t}\left((g-1)p_t\right)}$ is a theta-characteristic having ${h^0\left(C,(g-1)p_t\right)\geq r+1}$ and ${h^0\left(C,(g-1)p_t\right)\equiv r+1\,(\mathrm{mod}\,2)}$.
Thus ${[C_t,p_t]\in\mathcal{G}_{g}^r}$.

We note further that the map ${U\longrightarrow \mathcal{M}_{g,1}}$ given by ${t\longmapsto [C_t,p_t]}$ is finite.
Indeed it factors through the injection ${U/\Aut(C,L,p)\hookrightarrow \mathcal{S}_{g,1}}$ and the natural map $\mathcal{S}_{g,1}\longrightarrow \mathcal{M}_{g,1}$ defined by ${[C_t,L_t,p_t]\longmapsto [C_t,p_t]}$.
Hence each irreducible component $\mathcal{Z}\subset \mathcal{G}_{g}^r$ passing through $[C,p]$ satisfies
\begin{equation}\label{equation DIMENSION}
\dim \mathcal{Z} \geq \dim \left(\mathcal{P}\cap \mathcal{Y}\right) \geq \dim \mathcal{P} + \dim \mathcal{Y} - \dim \mathcal{C}^{(g-1)}.
\end{equation}
Finally, since ${\dim \mathcal{P}=\dim U=3g-2}$,  $\dim \mathcal{Y}\geq\dim U^r + r\geq 3g-2-\frac{r(r+1)}{2}+r$ by Theorem \ref{theorem COLOMBO}, and ${\dim \mathcal{C}^{(g-1)}=\dim U + g-1 = 4g-3}$, we deduce ${\dim \mathcal{Z} \geq  2g-\frac{r(r-1)}{2}-1}$ as claimed.
\end{proof}

In analogy with \cite[Proposition 2.4]{Fa}, the following result shows that if $\mathcal{G}_{g-1}^r$ possesses an irreducible component of the `expected dimension', its points can be suitably glued to elliptic curves, so that the resulting nodal curves in $\overline{\mathcal{M}}_{g,1}$ can be smoothed preserving a subcanonical point and the number of sections of the associated theta-characteristic.
\begin{theorem}\label{theorem INDUCTION}
Fix $r\geq 1$ and $g\geq 4$. If the locus $\mathcal{G}_{g-1}^r$ has an irreducible component $\mathcal{Z}_{g-1}$ of dimension $2g-\frac{r(r-1)}{2}-3$, then $\mathcal{G}_{g}^r$ possesses a non-empty irreducible component $\mathcal{Z}_{g}$ of dimension $2g-\frac{r(r-1)}{2}-1$.
\end{theorem}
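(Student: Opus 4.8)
The plan is to degenerate to the boundary divisor $\Delta_{1,1}\subset\overline{\mathcal{M}}_{g,1}$ by attaching an elliptic tail to a general pointed curve of $\mathcal{Z}_{g-1}$, and then to smooth the resulting nodal curve while preserving both the subcanonical point and the parity and section count of the associated theta-characteristic. Concretely, I would fix a general $[C,q]\in\mathcal{Z}_{g-1}$, so that $q\in C$ is subcanonical in genus $g-1$ with $h^0(C,(g-2)q)\geq r+1$ and $h^0(C,(g-2)q)\equiv r+1\,(\mathrm{mod}\,2)$, then choose an elliptic curve $E$ together with a point $p\in E$ such that $p-q$ is torsion of order exactly $2g-2$, and form $[C\cup_q E,p]\in\Delta_{1,1}$. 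By Lemma~\ref{lemma LIMIT g^r_(g-1)}(ii) the nodal curve $C\cup_q E$ then carries a limit $\mathfrak{g}^r_{g-1}$ $L$ satisfying the theta-characteristic conditions~(\ref{equation LOCALLY CLOSED}) with $a_r^L(p)=g-1$, so that $[C\cup_q E,p]$ is a \emph{limit subcanonical point} of the correct type in the sense of Remark~\ref{remark LIMIT SUBCANONICAL POINTS}.

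Next I would smooth this configuration in two coordinated halves. For the theta-characteristic, I pass to the blow-up $X=C\cup_{q'}R\cup_{q''}E$ and the spin curve $[X,\xi,\alpha]$ of Lemma~\ref{lemma FARKAS SMOOTHING THETA}; its hypothesis requires that $[C,\xi_{|C}]$ lie on a component of $\mathcal{S}^r_{g-1}$ of codimension exactly $\binom{r+1}{2}$. This is precisely where the \emph{expected dimension} of $\mathcal{Z}_{g-1}$ enters: reading the inequality chain in the proof of Theorem~\ref{theorem BOUND} in genus $g-1$ in reverse, the equality $\dim\mathcal{Z}_{g-1}=2(g-1)-\frac{r(r-1)}{2}-1$ forces $\mathcal{S}^r_{g-1}$ to meet the codimension bound of Theorem~\ref{theorem COLOMBO}(ii) along the relevant component. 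Lemma~\ref{lemma FARKAS SMOOTHING THETA} then yields $[X,\xi,\alpha]\in\overline{\mathcal{S}}^r_g$ and $[C\cup_q E]\in\overline{\mathcal{M}}^r_g$; moreover, since $p-q$ has order $2g-2$ the bundle $\xi_{|E}=\mathcal{O}_E((g-1)(p-q''))$ is nonzero $2$-torsion, hence an even theta-characteristic on $E$, so the parity of $\xi$ equals that of $\xi_{|C}$ and the smoothed theta-characteristics keep $h^0\geq r+1$ with $h^0\equiv r+1\,(\mathrm{mod}\,2)$. For the subcanonical point, I would smooth the limit $\mathfrak{g}^r_{g-1}$ $L$ along the same family via Eisenbud--Harris theory and use upper semi-continuity (Remark~\ref{remark UPPER SEMI-CONTINUITY}) to control the vanishing at the marked point. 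The hard part will be to run these two smoothings simultaneously, so that on a nearby smooth curve $C_t$ the smoothed theta-characteristic $N_t$ equals $\mathcal{O}_{C_t}((g-1)p_t)$ for a genuinely subcanonical $p_t$; this is the core smoothing criterion obtained by merging Cornalba's spin curves with Eisenbud--Harris limit linear series, and its output is that $[C\cup_q E,p]\in\overline{\mathcal{G}}^r_g$ lies in the closure of honest points $[C_t,p_t]\in\mathcal{G}^r_g$.

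With the smoothing in hand, the dimension count is bookkeeping. The boundary locus $\mathcal{B}$ swept out by the construction has
\[
\dim\mathcal{B}=\dim\mathcal{Z}_{g-1}+\underbrace{1}_{\text{choice of }E}+\underbrace{0}_{\text{finite choice of }p}=2g-\tfrac{r(r-1)}{2}-2,
\]
since for fixed $[C,q]$ the elliptic tail varies in a one-dimensional family and $p$ ranges over the finite set of points with $p-q$ of exact order $2g-2$. Letting $\mathcal{Z}_g$ be a component of $\mathcal{G}^r_g$ whose closure contains $\mathcal{B}$, the inclusions $\mathcal{B}\subset\Delta_{1,1}$ and $\mathcal{Z}_g\not\subseteq\Delta_{1,1}$ give $\dim\mathcal{Z}_g\geq\dim\mathcal{B}+1=2g-\frac{r(r-1)}{2}-1$, in agreement with Theorem~\ref{theorem BOUND}.

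For the reverse inequality I would intersect with the boundary: as $\Delta_{1,1}$ is an effective Cartier divisor and $\overline{\mathcal{Z}}_g\not\subseteq\Delta_{1,1}$, the intersection is of pure codimension one, so $\dim(\overline{\mathcal{Z}}_g\cap\Delta_{1,1})=\dim\mathcal{Z}_g-1$. By Remark~\ref{remark LIMIT SUBCANONICAL POINTS} and Lemma~\ref{lemma LIMIT g^r_(g-1)}(i), every point of $\overline{\mathcal{G}}^r_g\cap\Delta_{1,1}$ has the shape $[C'\cup_q E',p']$ with $q\in C'$ subcanonical, $h^0(C',(g-2)q)\geq r$ and $p'-q$ of order $2g-2$; the parity constraint forcing $\xi_{|E'}$ even then upgrades this to $h^0(C',(g-2)q)\geq r+1$ with the right parity, i.e. $[C',q]\in\overline{\mathcal{G}}^r_{g-1}$. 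Verifying that the genus-$(g-1)$ part of such a limit stays confined to the single component $\overline{\mathcal{Z}}_{g-1}$ bounds $\dim(\overline{\mathcal{Z}}_g\cap\Delta_{1,1})\leq\dim\mathcal{Z}_{g-1}+1=2g-\frac{r(r-1)}{2}-2$, whence equality $\dim\mathcal{Z}_g=2g-\frac{r(r-1)}{2}-1$ follows, and non-emptiness of $\mathcal{Z}_g$ is immediate from that of $\mathcal{Z}_{g-1}$. The two places I expect to demand the most care are the simultaneous smoothing of the theta-characteristic and the subcanonical point in the second paragraph, and this last claim that the limiting genus-$(g-1)$ curve cannot escape into a larger component of $\mathcal{G}^r_{g-1}$.
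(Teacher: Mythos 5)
Your skeleton is the paper's: the locus $\mathcal{W}$ of (\ref{equation W}), Lemma \ref{lemma LIMIT g^r_(g-1)}(ii) to produce the limit $\mathfrak{g}^r_{g-1}$, and Lemma \ref{lemma FARKAS SMOOTHING THETA} whose codimension hypothesis is extracted from the expected dimension of $\mathcal{Z}_{g-1}$ exactly as in Claim \ref{claim SMOOTHING THETA}. But the step you defer as ``the hard part''---running the spin smoothing and the linear-series smoothing simultaneously so that nearby \emph{smooth} fibres actually acquire a subcanonical point---is the real content of the theorem, and the tool you invoke for it, upper semi-continuity (Remark \ref{remark UPPER SEMI-CONTINUITY}), points the wrong way: semicontinuity says vanishing orders can only \emph{drop} on nearby smooth fibres, so the equality $a_r^L(p)=g-1$ on the central fibre gives no smooth fibre with a section vanishing to order $g-1$ at the marked point. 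The paper replaces this by a quantitative argument: over the spin locus $B^r$ (of dimension $\geq 3g-2-\frac{r(r+1)}{2}$ by Theorem \ref{theorem COLOMBO}) it forms, inside the relative symmetric product, the diagonal $\widetilde{\mathcal{P}}$ and the locus $\widetilde{\mathcal{Y}}$ of divisors of sections of the (limit) theta-characteristics; the intersection bound gives $\dim \mathcal{B}'\geq 2g-\frac{r(r-1)}{2}-1$, while Claim \ref{claim LIMIT W} shows that near the centre the nodal part $\mathcal{B}'\cap\Delta_{1,1}$ is exactly $\mathcal{W}$, of strictly smaller dimension $2g-\frac{r(r-1)}{2}-2$. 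Only this gap in dimensions forces $\mathcal{B}'$ to contain smooth curves, i.e. proves non-emptiness of $\mathcal{Z}_g$; without it your second paragraph is an assertion, not a proof.

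Your upper bound for $\dim\mathcal{Z}_g$ also has a genuine hole. You attempt a \emph{global} description of $\overline{\mathcal{Z}}_g\cap\Delta_{1,1}$, upgrading $h^0(C',(g-2)q)\geq r$ to $\geq r+1$ via ``the parity constraint forcing $\xi_{|E'}$ even.'' Nothing forces this at an arbitrary boundary point: Lemma \ref{lemma LIMIT g^r_(g-1)}(i) only yields that $p'-q$ is torsion of order \emph{dividing} $2g-2$, and if that order properly divides $2g-2$ (say it divides $g-1$) the limit spin structure on the tail is $\mathcal{O}_{E'}$, which is \emph{odd}; parity then gives $h^0(C',(g-2)q)\equiv r\ (\mathrm{mod}\ 2)$, so the genus-$(g-1)$ part need only lie in $\mathcal{G}^{r-1}_{g-1}$, a locus generally of larger dimension than $\mathcal{Z}_{g-1}$. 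Hence your inequality $\dim(\overline{\mathcal{Z}}_g\cap\Delta_{1,1})\leq\dim\mathcal{Z}_{g-1}+1$ does not follow from the analysis you describe. The paper sidesteps the global classification entirely: it proves only that $\overline{\mathcal{W}}$ is \emph{one} irreducible component of $\overline{\mathcal{Z}}_g\cap\Delta_{1,1}$ (this is the local statement of Claim \ref{claim LIMIT W}, where exactness of the torsion order and the parity persist near a general point of $\mathcal{W}$ by local constancy, Theorem \ref{theorem COLOMBO}(i)), and then uses purity of intersections with the Cartier divisor $\Delta_{1,1}$: since $\overline{\mathcal{Z}}_g\not\subseteq\Delta_{1,1}$, \emph{every} component of $\overline{\mathcal{Z}}_g\cap\Delta_{1,1}$ has dimension exactly $\dim\mathcal{Z}_g-1$, so the single component $\overline{\mathcal{W}}$ already yields $\dim\mathcal{Z}_g=\dim\mathcal{W}+1=2g-\frac{r(r-1)}{2}-1$, irrespective of what other boundary components may occur.
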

\begin{proof}
We consider the locus
\begin{equation}\label{equation W}
\mathcal{W}:=\left\{ [C\cup_q E,p]\in \overline{\mathcal{M}}_{g,1}\left|
\begin{array}{l}
[C,q]\in \mathcal{Z}_{g-1},\, E \textrm{ is an elliptic curve}, \\
p\in E,\, p-q\in E\textrm{ is a torsion point of}\\
\textrm{order exactly }2g-2
\end{array}\right.\right\}.
\end{equation}
Our aim is to prove that $\overline{\mathcal{W}}$ is an irreducible component of the boundary of $\overline{\mathcal{G}}_{g}^r\subset \overline{\mathcal{M}}_{g,1}$.
Then we shall deduce the existence of an irreducible component $\mathcal{Z}_g\subset\mathcal{G}_{g}^r$ whose closure contains $\mathcal{W}$, so that $\mathcal{Z}_g$ shall have the expected dimension.

Using notation of Section \ref{subsection THETA-CHARACTERISTICS}, let $\mathcal{S}^*_{g,1}:=\mathcal{S}_{g,1}\cup A^+_{1,1}\cup A^-_{1,1}$ be the partial compactification of $\mathcal{S}_{g,1}$ such that the marked point on a nodal curve lies on an elliptic tail having even theta-characteristic.
Let $[C\cup_q E,p]\in \mathcal{W}$ be a general point and consider the class ${[X,N,\alpha_N,p]\in \mathcal{S}^*_{g,1}}$, where
$X=C\cup_{q'} R\cup_{q''}E$ is the blow-up of $C\cup_q E$ at $q$, and $N$ is the line bundle such that ${N_{|C}=\mathcal{O}_C\left((g-2)q'\right)}$, ${N_{|R}=\mathcal{O}_R\left(1\right)}$ and ${N_{|E}=\mathcal{O}_E\left((g-1)(p-q'')\right)}$.
Firstly, we want to show that the triple $[X,N,\alpha_N]$ lies in the boundary of the locus $\mathcal{S}^r_g$ of smooth spin curves possessing a theta characteristic with at least $r+1$ sections and the same parity as $r+1$. Namely,

\begin{claim}\label{claim SMOOTHING THETA}
${[X,N,\alpha_N]\in \overline{\mathcal{S}}^r_{g}}$ and $[C\cup_q E]\in \overline{\mathcal{M}}^r_g$.
\end{claim}
\begin{proof}[Proof of Claim \ref{claim SMOOTHING THETA}]
By Lemma \ref{lemma FARKAS SMOOTHING THETA}, we only need to show that ${[C,N_{|C}]\in \mathcal{S}_{g-1}}$ lies on an irreducible component of $\mathcal{S}^r_{g-1}$ having codimension ${\frac{r(r+1)}{2}}$.
Arguing as in the proof of Theorem \ref{theorem BOUND}, we achieve an inequality analogous to (\ref{equation DIMENSION}), that is
\begin{equation*}
\dim \mathcal{Z}_{g-1} \geq \dim \mathcal{P}' + \dim \mathcal{Y}' - \dim \mathcal{C}^{(g'-1)},
\end{equation*}
where $g'=g-1$ is the genus of $C$ and $\mathcal{P}',\mathcal{Y}',\mathcal{C}^{(g'-1)}$ are defined analogously.
In particular, they satisfy ${\dim \mathcal{P}'=\dim U=3g'-2}$, ${\dim \mathcal{Y}'\geq\dim U^r + r}$ and ${\dim \mathcal{C}^{(g'-1)}=\dim U+g'-1=4g'-3}$.
Since $\dim \mathcal{Z}_{g-1}= 2g'-\frac{r(r-1)}{2}-1$ by assumption, we deduce that $\dim U^r\leq 3g'-2 - \frac{r(r+1)}{2}$, and Theorem \ref{theorem COLOMBO} assures that the latter is actually an equality.
Thus any irreducible component of $\mathcal{S}^r_{g-1,1}$ containing $\mathcal{Z}_{g-1}$ has codimension ${\frac{r(r+1)}{2}}$ in $\mathcal{S}_{g-1,1}$.
Hence the image of such a component under the forgetful map ${\mathcal{S}_{g-1,1}\longrightarrow \mathcal{S}_{g-1}}$ is an irreducible component of $\mathcal{S}^r_{g-1}$ having codimension $\frac{r(r+1)}{2}$ and passing through $[C,N_{|C}]$.
Thus ${[X,N,\alpha_N]\in \overline{\mathcal{S}}^r_{g}}$ and $[C\cup_q E]\in \overline{\mathcal{M}}^r_g$ by Lemma \ref{lemma FARKAS SMOOTHING THETA}.
\end{proof}

Let ${\left(\mathcal{X}\stackrel{\phi}{\longrightarrow} B,  \xi, \alpha\colon \xi^2\longrightarrow \omega_\phi, B\stackrel{\rho}{\longrightarrow} \mathcal{X}\right)}$ be the restriction to $\mathcal{S}^*_{g,1}$ of a versal deformation family of ${(X,N,\alpha_N,p)}$ in $\overline{\mathcal{S}}_{g,1}$, with $X_b:=\phi^{-1}(b)$, $\xi_b:=\xi_{|X_b}$, $\alpha_b:=\alpha_{|\xi_b}$ and $p_b:=\rho(b)$.
Hence ${(X_{0},\xi_{0},\alpha_{0},p_{0})=(X,N,\alpha_N,p)}$ for some $0\in B$, and we have a commutative diagram of finite maps
\begin{equation*}
\xymatrix{ B \ar[r]^\mu \ar[d]_{\psi} & \overline{\mathcal{S}}_{g,1}\ar[d]^{\pi_1} \\  \widetilde{B} \ar[r]^\nu & \overline{\mathcal{M}}_{g,1}}
\end{equation*}
where $\widetilde{B}$ is a versal deformation space of $(C\cup_q E,p)$ in $\overline{\mathcal{M}}_{g,1}$.
Moreover, up to shrinking $B$, we can assume that $p_b\in X_b\smallsetminus\Sing(X_b)$ for any $b\in B$.

Let ${B^r:=\left\{b\in B\left|[X_b,\xi_b,\alpha_b]\in \overline{\mathcal{S}}_{g}^r\right.\right\}}$.
Thanks to Claim \ref{claim SMOOTHING THETA} and Theorem \ref{theorem COLOMBO}, we have that $0\in B^r$ and $\dim B^r\geq 3g-2-\frac{r(r+1)}{2}$.
Clearly, if $b\in B^r$ and $X_b$ is smooth, then $X_b$ coincides with its stable model $\widetilde{X}_b$ and the theta-characteristic $\xi_b$ gives a $\mathfrak{g}^r_{g-1}$ on $\widetilde{X}_b$, $L_b:=\left(\xi_b,V_{\widetilde{X}_b}\right)$, with $\xi_b^2=\omega_{\widetilde{X}_b}$ and $V_{\widetilde{X}_b}\subset H^0(X_b,\xi_b)$.
On the other hand, if ${X_b=C_b\cup_{q'_b} R_b\cup_{q''_b}E_b}$ for some $b\in B^r$, then Lemma \ref{lemma FARKAS LIMIT g^r_(g-1)} assures that the stable model $\widetilde{X}_b=C_b\cup_{q_b}E_b\in \overline{\mathcal{M}}^r_g$ admits a limit $\mathfrak{g}^r_{g-1}$, we denote by $L_b:=\left\{\left(\mathcal{L}_{C_b},V_{C_b}\right)\left(\mathcal{L}_{E_b},V_{E_b}\right)\right\}$, such that
\begin{equation}\label{equation LOCALLY CLOSED PROOF}
\mathcal{L}^{2}_{C_b}\cong \omega_{C_b}(2q_b) \quad \textrm{and} \quad \mathcal{L}^{2}_{E_b}\cong \mathcal{O}_{E_b}\left((2g-2)q_b\right).
\end{equation}
In particular, conditions (\ref{equation LOCALLY CLOSED PROOF}) are necessary to a limit $\mathfrak{g}^{g-1}_{2g-2}$ $\{(\mathcal{L}^{2}_{C_b},W_{C_b}),(\mathcal{L}^{2}_{E_b},W_{E_b})\}$ for being a canonical limit linear series on $\widetilde{X}_b$ (see \cite[p. 363]{EH1}), and they provide locally closed conditions in the space of limit $\mathfrak{g}^r_{g-1}$ on $\widetilde{X}_b$.
We note further that by Lemma \ref{lemma LIMIT g^r_(g-1)}, the curve $C\cup_q E$ admits a limit $\mathfrak{g}^r_{g-1}$
${\displaystyle L=\left\{\left(\mathcal{L}_C,V_C\right),\left(\mathcal{L}_E,V_E\right)\right\}}$ satisfying conditions (\ref{equation LOCALLY CLOSED PROOF}) and such that $a_r^L(p)= g-1$, $\mathcal{L}_C=\mathcal{O}_C((g-1)q)$ and $\mathcal{L}_E=\mathcal{O}_E((g-1)p)$.
Following the recipe of \cite[Section 3]{EH1}, we can thus base-change $B^r$ to get a family of triples $\left\{\left.(\widetilde{X}_b,p_b,L_b)\right|b\in B^r\right\}$, where $(\widetilde{X}_b, p_b)$ is the stable pointed curve associated to $(X_b,p_b)$, the limit linear series $L_b$ is a limit $\mathfrak{g}^r_{g-1}$ as above, and $(\widetilde{X}_{0},p_{0},L_{0})=(C\cup_q E,p,L)$.

Then we argue as in the proof of Theorem \ref{theorem BOUND} and we consider the $(g-1)$-fold relative symmetric product ${\widetilde{\mathcal{X}}^{(g-1)}\longrightarrow B^r}$ of the family $\widetilde{\mathcal{X}}\longrightarrow B^r$, so that the fibre over each $b$ is the $(g-1)$-fold symmetric product $\widetilde{X}_b^{(g-1)}$ of the stable curve $\widetilde{X}_b$.
We define two subvarieties of $\widetilde{\mathcal{X}}^{(g-1)}$ as ${\widetilde{\mathcal{P}}:=\left\{\left. p_b+\dots+p_b\in \widetilde{X}_b^{(g-1)}\right|b\in B^r\right\}}$, which is the image of the natural map $\widetilde{\rho}^{(g-1)}\colon B^r\longrightarrow \widetilde{\mathcal{X}}^{(g-1)}$ induced by $\rho$, and
\begin{equation*}
\widetilde{\mathcal{Y}}:=\left\{ p_1+\dots+p_{g-1}\in \widetilde{X}_b^{(g-1)}\left|
\begin{array}{l}
\,b\in B^r \textrm{ and } p_1+\dots+p_{g-1}=(\sigma) \\ \textrm{for some global section }\sigma \textrm{ of }L_b
\end{array}\right.\right\},
\end{equation*}
which is the locus of divisors $D_b\in \widetilde{X}_b^{(g-1)}$ where the sections of the limit linear series vanish.
Consider the locus $B':=\left\{b\in B^r \left|p_b+\dots+p_b\in \widetilde{\mathcal{P}}\cap \widetilde{\mathcal{Y}}\right.\right\}$, and set ${\mathcal{B}':=(\pi_1\circ \mu)(B')\subset \overline{\mathcal{M}}_{g,1}}$.
We note that by construction of the versal deformation family in $\mathcal{S}^*_{g,1}$, the locus $\mathcal{B}'$ lies in $\mathcal{M}_{g,1}\cup \Delta_{1,1}$, that is either $X_b$ is a smooth curve with $p_b\in X_b$, or $X_b=C_b\cup_{q_b}E_b$ is a nodal curve with $g(E_b)=1$ and $p_b\in E_b$.
If $\widetilde{X}_b$ is a smooth curve, then $b\in B'$ if and only if there exists a section $\sigma\in H^0(\widetilde{X}_b,\xi_b)$ such that $\ord_{p_b}(\sigma)= g-1$, i.e. $p_b$ is a subcanonical point of $\widetilde{X}_b$. Thus $[\widetilde{X}_b,p_b]\in \mathcal{G}^r_g$ and $\mathcal{B}'\cap \mathcal{M}_{g,1}\subset\mathcal{G}^r_{g}$.
On the other hand, looking at nodal curves ${\widetilde{X}_b=C_b\cup_{q_b}E_b}$ with a marked point $p_b\in E_b$ on the elliptic tail, we have the following.
\begin{claim}\label{claim LIMIT W}
The locus $\mathcal{W}\subset \Delta_{1,1}\subset\overline{\mathcal{M}}_{g,1}$ is an irreducible component of $\mathcal{B}'\cap \Delta_{1,1}$.
\end{claim}
\begin{proof}[Proof of Claim \ref{claim LIMIT W}]
Let $D\subset B'$ be an open disk centered at $0$.
Our aim is to show that---up to shrinking $D$---$(\pi_1\circ \mu)(D)\subset \mathcal{W}$, that is $[\widetilde{X}_b,p_b]\in \mathcal{W}$ for any $b\in D$.\\
We note that $[\widetilde{X}_b=C_b\cup_{q_b} E_b, p_b]\in \mathcal{B}'\cap \Delta_{1,1}$ if and only if there exists a global section $\sigma\in V_{E_b}\subset H^0(E_b, \mathcal{L}_{E_b})$ such that $\ord_{p_b}(\sigma)=g-1$, i.e. $a^{L_b}_r(p_b)=g-1$.
Thus Lemma \ref{lemma LIMIT g^r_(g-1)} guarantees that for any $b\in D$, the point $q_b\in C_b$ is a subcanonical point with $h^0(C_b,(g-2)q_b)\geq r$, and $p_b-q_b\in E_b$ is a torsion point of order $2g-2$.
In particular, the period of $p_b-q_b$ is exactly $2g-2$, because this is the period of $p-q$ on the special fibre $E$.\\
So we want to show that ${h^0\left(C_b,(g-2)q_b\right)\equiv r+1\,(\mathrm{mod}\,2)}$, which implies ${h^0(C_b,(g-2)q_b)\geq r+1}$ also.
We note that for any $b\in D$, we can endow the curve ${X_b=C_b\cup_{q'_b} R_b\cup_{q''_b}E_b}$ with a line bundle $N_b$ such that
$N_{b|C_b}:=\mathcal{O}_{C_b}\left((g-2)q'_b\right)$, $N_{b|E_b}:=\mathcal{O}_{E_b}\left((g-1)(p_b-q''_b)\right)$ and $N_{b|R_b}:=\mathcal{O}_{R_b}\left(1\right)$.
Hence it is naturally defined a family of pointed spin curves $(X_b, N_b, \beta_b, p_b)$ parameterized over $D$, whose central fibre is ${(X, N, \alpha_N, p)}$.
In particular ${X=C\cup_{q'} R\cup_{q''}E}$, with $[C,q]\in \mathcal{G}^r_{g-1}$, ${N_{|C}=\mathcal{O}_C\left((g-1)q'\right)}$ and ${N_{|E}=\mathcal{O}_E\left((g-1)(p-q'')\right)}$.
Hence ${h^0\left(C,N_{|C}\right)\equiv r+1\,(\mathrm{mod}\,2)}$, and $h^0(E,N_{|E})=0$ as the period of $p-q$ is exactly $2g-2$.
Since even and odd theta-characteristics do not mix (see Theorem \ref{theorem COLOMBO}), we deduce that ${h^0\left(C_b,N_{b|C_b}\right)\equiv r+1\,(\mathrm{mod}\, 2)}$ as well.
Thus $[C_b,q_b]\in \mathcal{G}^r_{g-1}$. \\
Finally, as $\mathcal{Z}_{g-1}\subset \mathcal{G}^r_{g-1}$ is an irreducible component and $[C,q]\in \mathcal{Z}_{g-1}$ is a general point, we conclude that $[C_b,q_b]\in \mathcal{Z}_{g-1}$.
Therefore $[\widetilde{X}_b=C_b\cup_{q_b} E_b,p_b]\in \mathcal{W}$, and hence $(\pi_1\circ \mu)(D)\subset \mathcal{W}$ as claimed.
\end{proof}

We point out that ${\dim \mathcal{B}'\geq 2g-\frac{r(r-1)}{2}-1}$.
Indeed
\begin{equation*}
\dim \mathcal{B}'=\dim B' \geq \dim \widetilde{\mathcal{P}} + \dim \widetilde{\mathcal{Y}} - \dim \widetilde{\mathcal{X}}^{(g'-1)},
\end{equation*}
where ${\dim \widetilde{\mathcal{P}}=\dim B^r\geq 3g-2-\frac{r(r+1)}{2}}$, ${\dim \widetilde{\mathcal{Y}}= \dim B^r+r}$ and ${\dim \widetilde{\mathcal{X}}^{(g-1)}=\dim B^r +g-1}$.
On the other hand, $\dim \mathcal{W}=\dim \mathcal{Z}_{g-1}+1= 2g-\frac{r(r-1)}{2}-2$.
Therefore $\dim \mathcal{B}'>\dim \mathcal{W}$, hence $\mathcal{B}'$ must parameterize also smooth curves by Claim \ref{claim LIMIT W}, that is $\mathcal{B}'\cap \mathcal{M}_{g,1}\subset\mathcal{G}^r_{g}$ is non-empty.
Thus there exists an irreducible component $\mathcal{Z}_g\subset \mathcal{G}_{g}^r$ such that its closure $\overline{\mathcal{Z}}_g$ contains $\mathcal{W}$.
In particular, $\overline{\mathcal{W}}$ is an irreducible component of $\overline{\mathcal{Z}}_g\cap \Delta_{1,1}$, and hence ${\dim \mathcal{Z}_g= \dim \mathcal{W}+1=2g-\frac{r(r-1)}{2}-1}$.
\end{proof}

\smallskip
We are now ready to prove Theorems \ref{theorem r=2} and \ref{theorem r=3}.
\begin{proof}[Proof of Theorem \ref{theorem r=2}]
Since $\mathcal{G}_{g}^2\subset \mathcal{G}_{g}^{\textrm{odd}}$ and the general point $[C,p]\in \mathcal{G}_{g}^{\textrm{odd}}$ is such that $h^0\left(C,(g-1)p\right)=1$ (see \cite[Theorem 2.1]{Bu}), we deduce $\dim \mathcal{G}_{g}^2< \mathcal{G}_{g}^{\textrm{odd}}=2g-1$.
Thus Theorem \ref{theorem BOUND} assures that any non-empty irreducible component of $\mathcal{Z}\subset \mathcal{G}_{g}^2$ has dimension $\dim \mathcal{Z}=2g-2$.
To show that $\mathcal{G}_{g}^2$ is non-empty for any $g\geq 6$ we argue by induction on $g$.
For $g=6$ the assertion follows from Example \ref{example G_6^2}, whereas Theorem \ref{theorem INDUCTION} leads to the assertion for any $g$.
\end{proof}

\begin{proof}[Proof of Theorem \ref{theorem r=3}]
By arguing as above, Example \ref{example G_9^3 minimal} gives that $\mathcal{G}^3_9$ possesses an irreducible component of dimension $2g-4$.
Then by induction on $g$, the assertion follows from Theorem \ref{theorem INDUCTION}.
\end{proof}

We recall that given a pointed curve $[C,p]\in \mathcal{G}^r_g$, the smallest possible canonical ramification sequence $\alpha^{K_C}(p)$ at $p$ is
\begin{equation}\label{equation MINIMAL RAMIFICATION 2}
\alpha:=(\underbrace{0,\dots,0}_{g-r-1},\underbrace{r,\dots,r}_{r},g-1),
\end{equation}
and the corresponding sequence of Weierstrass gaps is $(1,2,\dots,g-1-r,g,\dots,g+r,2g-1)$ (see Section \ref{subsection WEIERSTRASS POINTS}).
Then, using upper semi-continuity, Theorem \ref{theorem INDUCTION} gives a smoothing criterium for preserving the minimality of canonical ramification sequences, and hence the corresponding sequences of Weierstrass gaps.
Namely,
\begin{corollary}\label{corollary SMOOTHING WEIERSTRASS}
Under the assumption of Theorem \ref{theorem INDUCTION}, suppose that the general point $[C,q]\in \mathcal{Z}_{g-1}$ has canonical ramification sequence $\alpha^{K_C}(q)=(0,\dots,0,r,\dots,r,g-2)$, as in (\ref{equation MINIMAL RAMIFICATION 2}).
Then the general point of $\mathcal{Z}_{g}$ has canonical ramification sequence given by (\ref{equation MINIMAL RAMIFICATION 2}), as well.
\end{corollary}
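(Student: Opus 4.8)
The plan is to propagate the minimal canonical ramification sequence (\ref{equation MINIMAL RAMIFICATION 2}) through the smoothing of Theorem \ref{theorem INDUCTION}, by combining upper semi-continuity of vanishing sequences with the symmetry enjoyed by the Weierstrass gaps of a subcanonical point. Let $[C',p']$ be the general point of the component $\mathcal{Z}_g$ furnished by Theorem \ref{theorem INDUCTION}; by construction it is a smoothing of a general pointed nodal curve $[C\cup_q E,p]\in\mathcal{W}$, with $[C,q]\in\mathcal{Z}_{g-1}$. Since (\ref{equation MINIMAL RAMIFICATION 2}) is the minimal ramification sequence allowed by the semigroup condition for points of $\mathcal{G}^r_g$ (see Remark \ref{remark EARLY BOUND}), it suffices to prove that this minimum is attained at $[C',p']$, i.e. that $\alpha^{K_{C'}}(p')$ equals (\ref{equation MINIMAL RAMIFICATION 2}).

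The first ingredient is the symmetry of the gap sequence at the subcanonical point $p'$. As $\omega_{C'}\cong\mathcal{O}_{C'}((2g-2)p')$, Riemann--Roch gives that an integer $n$ is a Weierstrass non-gap of $p'$ if and only if $2g-1-n$ is a gap. Hence the entire sequence of gaps is determined by its portion in $[1,g-1]$, equivalently by the integer $h^0(C',(g-1)p')$ together with the vanishing sequence of the complete linear series $|(g-1)p'|$ at $p'$. Because $\dim\mathcal{Z}_g=2g-\frac{r(r-1)}{2}-1$ is the expected dimension (Theorem \ref{theorem BOUND}), its general point has $h^0(C',(g-1)p')=r+1$; otherwise $\mathcal{Z}_g$ would be contained in the strictly smaller locus $\mathcal{G}^{r+2}_g$. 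Therefore it is enough to show that $|(g-1)p'|$ has vanishing sequence $(0,1,\dots,r-1,g-1)$ at $p'$: this forces the non-gaps of $p'$ in $[0,g-1]$ to be exactly $\{0,g-r,\dots,g-1\}$, and the symmetry above then reconstructs the full minimal sequence, that is (\ref{equation MINIMAL RAMIFICATION 2}).

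This last vanishing sequence is controlled by upper semi-continuity in the family of Theorem \ref{theorem INDUCTION}. On the central fibre $C\cup_q E$ the theta limit $\mathfrak{g}^r_{g-1}$ $L=\{(\mathcal{L}_C,V_C),(\mathcal{L}_E,V_E)\}$ satisfies $a_r^L(p)=g-1$, while the hypothesis that $[C,q]\in\mathcal{Z}_{g-1}$ has minimal canonical ramification means the non-gaps of $q$ that are at most $g-2$ equal $\{0,g-r-1,\dots,g-2\}$, whence the $C$-aspect of $L$ has $a^{L_C}(q)=(1,2,\dots,r,g-1)$. Inserting this into the recipe of Lemma \ref{lemma LIMIT g^r_(g-1)}(ii)---where the torsion of $p-q$ of order exactly $2g-2$ is precisely what secures the conditions of \cite[Proposition 5.2]{EH2}---yields $a^L(p)=a^{L_E}(p)=(0,1,\dots,r-1,g-1)$. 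By Remark \ref{remark UPPER SEMI-CONTINUITY} the vanishing sequence of $|(g-1)p'|$ at $p'$ is then bounded above, componentwise, by $(0,1,\dots,r-1,g-1)$; since here any vanishing sequence is strictly increasing with top entry $g-1$, the bound is attained, and the reduction of the previous paragraph gives $\alpha^{K_{C'}}(p')=$ (\ref{equation MINIMAL RAMIFICATION 2}).

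The step I expect to be most delicate is the transfer across the smoothing: one must verify that at the general point of $\mathcal{Z}_g$ the complete series $|(g-1)p'|$ has dimension exactly $r+1$ and degenerates to the limit series $L$ produced in Theorem \ref{theorem INDUCTION}, so that upper semi-continuity may legitimately be applied to the full series rather than to a proper sub-$\mathfrak{g}^r_{g-1}$. Granting this compatibility, the upper and lower bounds on $\alpha^{K_{C'}}(p')$ coincide, and the general point of $\mathcal{Z}_g$ carries the minimal canonical ramification sequence (\ref{equation MINIMAL RAMIFICATION 2}), as desired.
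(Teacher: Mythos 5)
Your route is genuinely different from the paper's---you work with the theta limit $\mathfrak{g}^r_{g-1}$ plus the Riemann--Roch symmetry of gaps at a subcanonical point, whereas the paper applies semi-continuity to the \emph{canonical} limit $\mathfrak{g}^{g-1}_{2g-2}$---and your computations of $a^{L_C}(q)=(1,\dots,r,g-1)$, of $a^{L_E}(p)=(0,1,\dots,r-1,g-1)$, and the symmetry reduction are all correct. But there is a genuine gap, located exactly at the step you flag as ``delicate''. Your justification that the general $[C',p']\in\mathcal{Z}_g$ has $h^0\left(C',(g-1)p'\right)=r+1$ does not work: Theorem \ref{theorem BOUND} is only a \emph{lower} bound on the dimension of components, so if $\mathcal{Z}_g$ were contained in $\mathcal{G}^{r+2}_g$ it would simply be an irreducible component of $\mathcal{G}^{r+2}_g$ of dimension $2g-\frac{r(r-1)}{2}-1$, exceeding the expected value $2g-\frac{(r+1)(r+2)}{2}-1$; nothing forbids this, and the paper's own Example \ref{example G_9^3 maximal} shows that components exceeding the expected dimension do occur. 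Moreover, your appeal to Remark \ref{remark UPPER SEMI-CONTINUITY} presupposes a family of $\mathfrak{g}^r_{g-1}$'s whose central fibre is the specific limit series $L$ and whose general fibre is the \emph{complete} series $\left|(g-1)p'\right|$; you grant this rather than prove it, and since the family built in the proof of Theorem \ref{theorem INDUCTION} a priori carries only an $(r+1)$-dimensional subseries of $H^0\left(C',\mathcal{O}_{C'}((g-1)p')\right)$ on smooth fibres, completeness again reduces to the unproven equality $h^0=r+1$. The two gaps are really one, and it is essential: if $h^0\geq r+3$ the symmetry reconstruction gives a different gap sequence, so the conclusion would fail.

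The gap is reparable, but not by the argument you give. In the spin family $(\mathcal{X}\to B,\xi,\alpha)$ from the proof of Theorem \ref{theorem INDUCTION}, the central fibre satisfies $h^0(X_0,\xi_0)=h^0\left(C,N_{|C}\right)+h^0\left(E,N_{|E}\right)=(r+1)+0$: here $h^0\left(C,(g-2)q\right)=r+1$ is precisely your hypothesis of minimal canonical ramification at $[C,q]\in\mathcal{Z}_{g-1}$, and $h^0\left(E,(g-1)(p-q'')\right)=0$ because $p-q$ has exact order $2g-2$. Upper semi-continuity of $h^0$ then gives $h^0(X_b,\xi_b)\leq r+1$ near the central fibre, hence $=r+1$ at points of $\mathcal{Z}_g$ near $\mathcal{W}$, hence at the general point by irreducibility; with that, $L_b$ is the complete series on nearby smooth fibres, your semi-continuity step becomes legitimate, and the rest of your proof closes. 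The paper's choice of the canonical series sidesteps all of this: the canonical series is complete of dimension $g$ on every fibre, its ramification sequence at $p_t$ already encodes the whole gap sequence, and the paper concludes by computing $\alpha^{H_C}(q)$ from the hypothesis on $\alpha^{K_C}(q)$, passing to $\alpha^{H_E}(q)$ via the refined compatibility conditions, and using \cite[Proposition 5.2]{EH2} together with the exact $(2g-2)$-torsion of $p-q$ to force $\alpha^{H_E}(p)=\alpha$.
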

\begin{proof}
Let $[C\cup_q E, p]\in \mathcal{W}$ as in (\ref{equation W}), so that $[C\cup_q E, p]\in \overline{\mathcal{Z}}_g\subset \overline{\mathcal{G}}_{g}^r$.
We then consider a family $\left(\mathcal{X}\stackrel{\phi}{\longrightarrow} T, T\stackrel{\rho}{\longrightarrow} \mathcal{X}\right)$ of pointed curves such that $[X_0,p_0]=[C\cup_q E, p]$ is the central fiber, and $[X_t,p_t]\in \mathcal{Z}_g$ is a smooth pointed curve for any $t\neq 0$.
The canonical bundles $\omega_{X_t}\cong \mathcal{O}_{X_t}\left((2g-2)p_t\right)$ induce a limit $\mathfrak{g}^{g-1}_{2g-2}$ on $C\cup_q E$,
$H:=\left\{\left(\mathcal{H}_{C},V_{C}\right),\left(\mathcal{H}_{E},V_{E}\right)\right\}$, where
\begin{equation*}
\mathcal{H}_{C}\cong \omega_C(2q)\cong \mathcal{O}_{C}\left((2g-2)q\right) \, \textrm{ and } \, \mathcal{H}_{E}\cong \omega_E\left((2g-2)q\right)\cong \mathcal{O}_{E}\left((2g-2)q\right)
\end{equation*}
(see \cite[p. 363]{EH1}). Furthermore, $H_C:=\left(\mathcal{H}_{C},V_{C}\right)$ is a complete linear series, that is $V_{C}=H^0\left(C, \mathcal{H}_{C}\right)$, because $\dim V_C=g$, the line bundle $\omega_C(q)\cong \mathcal{O}_{C}\left((2g-3)q\right)$ has a base point at $q$, and $\mathcal{H}_{C}$ is base-point free.

Letting $\alpha=(\alpha_0,\dots,\alpha_{g-1})$ be as in $(\ref{equation MINIMAL RAMIFICATION 2})$, we have that $\alpha^{H}(p)\geq\alpha^{\omega_{X_t}}(p_t)\geq \alpha$ by upper semi-continuity and minimality of $\alpha$ in $\mathcal{G}^r_g$ (see Remarks \ref{remark EARLY BOUND} and \ref{remark UPPER SEMI-CONTINUITY}).
In particular, setting $H_E:=\left(\mathcal{H}_{E},V_{E}\right)$, we have $\alpha^{H}(p)=\alpha^{H_E}(p)$ and the assertion shall follow from $\alpha^{H_E}(p)=\alpha$.

By the assumption on $\alpha^{K_C}(q)$ and the description of $H_C$, we have that its ramification sequence at $q\in C$ is $\alpha^{H_C}(q)=(0,1,\dots,1,r+1,\dots\,r+1,g-1)$, that is $\alpha^{H_C}(q)=(\alpha_0,\alpha_1+1,\dots,\alpha_{g-2}+1,\alpha_{g-1})$.
Since $H$ is a limit $\mathfrak{g}^{g-1}_{2g-2}$, compatibility conditions (\ref{equation COMPATIBILITY CONDITIONS}) give that the ramification sequence of $H_E$ at $q\in E$ is $\alpha^{H_E}(q)=(g-1-\alpha_{g-1},g-2-\alpha_{g-2},\dots,g-2-\alpha_{1},g-1-\alpha_{0})$.
Moreover, $\alpha_i^{H_E}(p)+\alpha_{g-1-i}^{H_E}(q)\leq g-1$ for any $0\leq i\leq g-1$, as $H_E$ is a $\mathfrak{g}^{g-1}_{2g-2}$ on $E$.
Thus $\alpha_0^{H_E}(p)=\alpha_0$, $\alpha_{g-1}^{H_E}(p)=\alpha_{g-1}$, and $\alpha_i\leq\alpha_i^{H_E}(p)\leq \alpha_i+1$ for any $1\leq i\leq g-2$.
In particular, $g-2\leq \alpha_i^{H_E}(p)+\alpha_{g-1-i}^{H_E}(q)\leq g-1$.
Then \cite[Proposition 5.2]{EH2} assures that for each $0\leq i\leq g-1$, if $\alpha_i^{H_E}(p)+ \alpha_{g-1-i}^{H_E}(q)= g-1$, then
\begin{equation*}
\left(\alpha_i^{H_E}(p)+i\right)p+ \left(\alpha_{g-1-i}^{H_E}(q)+g-1-i\right)q \sim (2g-2)p\,.
\end{equation*}
Since the period of the torsion point $p-q\in E$ is exactly $2g-2$, we conclude that $\alpha_i^{H_E}(p)+ \alpha_{g-1-i}^{H_E}(q)= g-1$ is satisfied only for $i=0$ and $i=g-1$.
Thus $\alpha_i^{H_E}(p)=\alpha_i$ for any $0\leq i\leq g-1$ and the assertion follows.
\end{proof}

Finally, using the latter corollary and arguing as in Theorems \ref{theorem r=2} and \ref{theorem r=3}, we deduce the following.
\begin{corollary}\label{corollary r=2 and 3}
For any $g\geq 6$, there exists an irreducible component of $\mathcal{G}^2_g$ whose general point $[C,p]$ has canonical ramification sequence  $\alpha^{K_C}(q)=(0,\dots,0,2,2,g-1)$.\\
Analogously, for any $g\geq 9$, there exists an irreducible component of $\mathcal{G}^3_g$ whose general point $[C,p]$ has canonical ramification sequence $\alpha^{K_C}(q)=(0,\dots,0,3,3,3,g-1)$.
\end{corollary}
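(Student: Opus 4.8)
The plan is to establish both assertions by a single induction on $g$, propagating two properties simultaneously: that $\mathcal{G}^r_g$ admits an irreducible component $\mathcal{Z}_g$ of the \emph{expected} dimension $2g-\frac{r(r-1)}{2}-1$, and that the general point of $\mathcal{Z}_g$ realizes the minimal canonical ramification sequence $(\ref{equation MINIMAL RAMIFICATION 2})$. The point of carrying both along is that the first property is exactly the numerical hypothesis that Theorem \ref{theorem INDUCTION} requires at the preceding genus, while the second is precisely the hypothesis of Corollary \ref{corollary SMOOTHING WEIERSTRASS}; thus each inductive step feeds the next, and a single application of the machinery delivers both conclusions for the \emph{same} component at each stage.

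For the base cases I would invoke the examples already constructed. When $r=2$, Example \ref{example G_6^2} shows that $\mathcal{G}^2_6$ is non-empty and irreducible of dimension $10=2\cdot 6-1-1$, and that every point has $\alpha^{K_C}(p)=(0,0,0,2,2,5)$, which is exactly the minimal sequence $(\ref{equation MINIMAL RAMIFICATION 2})$ for $r=2$, $g=6$. When $r=3$, the component $\mathcal{Z}_1$ of Example \ref{example G_9^3 minimal} has dimension $14=2\cdot 9-3-1$ and general ramification sequence $(0,\dots,0,3,3,3,8)$, again the minimal one. Hence both induction hypotheses hold at $g=6$ and at $g=9$, respectively.

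For the inductive step, I would assume that $\mathcal{G}^r_{g-1}$ possesses a component $\mathcal{Z}_{g-1}$ of dimension $2g-\frac{r(r-1)}{2}-3$ whose general point $[C,q]$ has canonical ramification sequence $(0,\dots,0,r,\dots,r,g-2)$. Since this dimension is precisely the hypothesis of Theorem \ref{theorem INDUCTION}, that theorem produces a non-empty irreducible component $\mathcal{Z}_g\subset\mathcal{G}^r_g$ of the expected dimension $2g-\frac{r(r-1)}{2}-1$. Because the general point of $\mathcal{Z}_{g-1}$ carries the minimal sequence, the assumptions of Corollary \ref{corollary SMOOTHING WEIERSTRASS} are met for this very $\mathcal{Z}_g$, and its conclusion forces the general point of $\mathcal{Z}_g$ to have canonical ramification sequence $(0,\dots,0,r,\dots,r,g-1)$, i.e.\ $(\ref{equation MINIMAL RAMIFICATION 2})$ at genus $g$. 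This re-establishes both induction hypotheses at $g$ and closes the step, yielding the two families of components for all $g\geq 6$ and $g\geq 9$ claimed.

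I expect that the real content is entirely packaged in Theorem \ref{theorem INDUCTION} and Corollary \ref{corollary SMOOTHING WEIERSTRASS}, so the argument is bookkeeping rather than new geometry. The one place that needs care is that the dimension statement and the ramification statement must attach to the \emph{same} component $\mathcal{Z}_g$; this is automatic because Corollary \ref{corollary SMOOTHING WEIERSTRASS} is phrased under the assumptions of Theorem \ref{theorem INDUCTION} and refers to the component that theorem produces. The only further checks are the trivial numerical compatibilities that the minimal sequence at genus $g-1$ is exactly $(0,\dots,0,r,\dots,r,g-2)$, matching the hypothesis of Corollary \ref{corollary SMOOTHING WEIERSTRASS}, and that its output $(0,\dots,0,r,\dots,r,g-1)$ is the minimal sequence at genus $g$; both are immediate from the shape of $(\ref{equation MINIMAL RAMIFICATION 2})$. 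I would therefore not anticipate any genuine obstacle beyond this.
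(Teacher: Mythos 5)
Your proposal is correct and follows essentially the same route as the paper: the paper's own proof is precisely to take the base components of Example \ref{example G_6^2} (for $r=2$, $g=6$) and Example \ref{example G_9^3 minimal} (for $r=3$, $g=9$), whose general points have the minimal sequence (\ref{equation MINIMAL RAMIFICATION 2}), and then run the induction of Theorems \ref{theorem r=2} and \ref{theorem r=3} while invoking Corollary \ref{corollary SMOOTHING WEIERSTRASS} at each step to preserve minimality of the ramification sequence for the component produced by Theorem \ref{theorem INDUCTION}. Your observation that both properties must attach to the same component $\mathcal{Z}_g$, and that this is guaranteed because Corollary \ref{corollary SMOOTHING WEIERSTRASS} is stated under the assumptions of Theorem \ref{theorem INDUCTION} and refers to its output, is exactly the bookkeeping the paper relies on.
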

\noindent In particular, the linear series $\left|(g-1)p\right|$ associated to such a general point is a base-point-free $\mathfrak{g}^r_{g-1}$ both when $r=2$ and $r=3$.

\section{Triply periodic minimal surfaces}\label{section PERIODIC MINIMAL SURFACES}

In this section we deal with the application to periodic minimal surfaces.
Initially, we recall the connection between periodic surfaces and theta-characteristics.
Then we follow \cite{Pi} and we state the main technical results leading to the proofs of Theorems \ref{theorem DENSITY} and \ref{theorem MINIMAL SURFACES}.

We assume hereafter that $X$ is a compact connected Riemann surface of genus $g\geq 3$.
In the following, we shall sometimes think $X$ as a complex algebraic curve, and we shall use the same notation of previous sections for the corresponding moduli spaces of Riemann surfaces.

We recall that a Riemann surface $X$ is \emph{periodic} if admits a conformal minimal immersion $\iota\colon X\longrightarrow \mathbb{T}^3$ into a flat 3-dimensional torus $\mathbb{T}^3=\mathbb{R}^3/\Lambda$.
Using the classical Generalized Weierstrass Representation (see e.g. \cite{La,Me,AP}), periodic surfaces may be characterized as follows (see \cite[Section 0]{Pi}).
Given a Riemann surface $X$, we denote by $G_\mathbb{Q}:=G\left(3, H^1\left(X,\mathbb{Q}\right)\right)$ and $G_\mathbb{R}:=G\left(3, H^1\left(X,\mathbb{R}\right)\right)$ the Grassmannians of 3-dimensional vector spaces in $H^1\left(X,\mathbb{Q}\right)$ and $H^1\left(X,\mathbb{R}\right)$, respectively, so that $G_\mathbb{Q}\subset G_\mathbb{R}$ is a dense inclusion.
Then $X$ is a periodic surface if and only if there exists a theta-characteristic $\mathcal{L}$ on $X$ such that $h^0(X,\mathcal{L})\geq 2$, the linear series $|\mathcal{L}|$ is base-point-free, and there exist two global sections $s_0,s_1\in H^0\left(X,\mathcal{L}\right)$ not vanishing simultaneously such that the holomorphic 1-forms $\omega_0,\omega_1,\omega_2\in H^0\left(X, \omega_X\right)$ defined as
\begin{equation*}
\omega_0:=s_0^2-s_1^2\quad \omega_1:=i(s_0^2+s_1^2)\quad \omega_2:=2s_0s_1
\end{equation*}
satisfy $\mathrm{Span}\left(\mathrm{Re}(\omega_0),\mathrm{Re}(\omega_1),\mathrm{Re}(\omega_2)\right)\in G_\mathbb{Q}$.
In this setting, the 4-tuple $(X,\mathcal{L},s_0,s_1)$ is the spinor representation of the periodic surface $X$ (see e.g. \cite{KS}).
Moreover, the set $\Lambda:=\left\{\left. \mathrm{Re}\left(\int_\gamma\omega_0,\int_\gamma\omega_1,\int_\gamma\omega_2\right) \in \mathbb{R}^3\right|\gamma\in H_1(X, \mathbb{Z})\right\}$ is a lattice of $\mathbb{R}^3$ of rank 3 and, after a translation, the conformal minimal immersion $\iota\colon X\longrightarrow \mathbb{R}^3/\Lambda$ is given by $\iota(p)= \mathrm{Re}\left(\small\int^p_{q}\omega_1,\int^p_{q}\omega_2,\int^p_{q}\omega_3\right)$, for some fixed point $q\in X$.

\smallskip
Thanks to the construction of \cite[Section 2]{Pi} and using notation of Section \ref{subsection THETA-CHARACTERISTICS}, we then introduce the moduli space $\mathcal{P}_g$ parameterizing 4-tuples $[X,\mathcal{L},s_0,s_1]$, where $[X,\mathcal{L}]\in \mathcal{S}_g^1\cup \mathcal{S}_g^2$ is a smooth spin Riemann surface such that $h^0\left(X,\mathcal{L}\right)\geq 2$, and $s_0,s_1\in H^0\left(X,\mathcal{L}\right)$ are two independent sections.

Given $[X,\mathcal{L},s_0,s_1]\in\mathcal{P}_g$, it is possible to consider its versal deformation space $\left(\mathcal{X}\stackrel{\phi}{\longrightarrow} B, \mathcal{N}, \sigma_0,\sigma_1\right)$ in $\mathcal{P}_g$ (see \cite[Section 3]{Pi}), where $\mathcal{X}\stackrel{\phi}{\longrightarrow} B$ is a family of Riemann surfaces $X_b:=\phi^{-1}(b)$ of genus $g$, and $\mathcal{N}$ is a line bundle on $\mathcal{X}$ with sections $\sigma_0,\sigma_1\in H^0\left(\mathcal{X},\mathcal{N}\right)$ restricting on each fibre to a point $[X_b, \mathcal{N}_b, \sigma_{0,b},\sigma_{1,b}]\in \mathcal{P}_g$.
In particular, both the natural modular maps $\mu\colon B\longrightarrow \mathcal{P}_g$ and $\nu\colon B\longrightarrow \mathcal{M}_g$ are surjective with discrete fibres.
We assume further that $B$ is simply connected and the family is complete, that is $\mu(B)$ contains an irreducible component of $\mathcal{P}_g$.
Moreover, we denote by $B(\mathbb{Q})\subset B$ sublocus of periodic surfaces.

\smallskip
The main technical result in \cite{Pi} connects variations of a minimal immersion preserving the real periods of $\displaystyle\left(\mathrm{Re}(\omega_0),\mathrm{Re}(\omega_1),\mathrm{Re}(\omega_2)\right)$ up to the first order, and periodic Jacobi fields along such a minimal immersion.
In particular, the variation of the period map at a point $[X,\mathcal{L},s_0,s_1]\in\mathcal{P}_g$ is related by \cite[Proposition 6.11]{Pi} to the kernel of the Schr\"{o}dinger operator $\Delta + \left|\nabla f\right|^2$ acting on real valued functions $u\in C^{\infty}(X)$, where $f\colon X\longrightarrow \mathbb{P}^1\cong S^2\subset \mathbb{R}^3$ is the holomorphic map defined by the linear system $|\Span(s_0,s_1)|$, $\Delta$ is the Laplace-Beltrami operator and $\nabla$ is the gradient (see \cite{MR}).
Then \cite[Theorem 2]{Na} and the Inverse Function Theorem lead to the following density result (see \cite[Proposition 6.12 and Theorem 6.14]{Pi}).
\begin{proposition}\label{proposition DENSITY}
Let $[X,\mathcal{L},s_0,s_1]\in \mathcal{P}_g$ be such that $s_0$ and $s_1$ do not have common zeroes, and $s_1$ vanishes at a single point.
Let $B$ be a versal deformation space of $(X,\mathcal{L},s_0,s_1)$ in $\mathcal{P}_g$.
If $\dim_{\mathbb{R}} B=6g$, then the locus $B(\mathbb{Q})$ is dense in $B$.
\end{proposition}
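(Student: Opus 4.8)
The plan is to realize $B(\mathbb{Q})$ as the preimage of a dense subset under a real-analytic period map and to prove that this map is a submersion at the marked point, so that openness of submersions transfers the density from target to source. Since $B$ is simply connected, the Gauss--Manin local systems $H^1(X_b,\mathbb{R})$ and $H^1(X_b,\mathbb{Q})$ may be trivialized over $B$, and the assignment $b\longmapsto \Span\left(\mathrm{Re}(\omega_0),\mathrm{Re}(\omega_1),\mathrm{Re}(\omega_2)\right)$ then defines a real-analytic map $\Pi\colon B\longrightarrow G_\mathbb{R}$ with $\Pi^{-1}\left(G_\mathbb{Q}\right)=B(\mathbb{Q})$ by the very characterization of periodic surfaces. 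As $\dim_\mathbb{R} H^1(X,\mathbb{R})=2g$, the target has dimension $\dim_\mathbb{R} G_\mathbb{R}=3(2g-3)=6g-9$, strictly smaller than $\dim_\mathbb{R} B=6g$; hence $\Pi$ has room to be a submersion, and I would reduce everything to proving that $d_{b_0}\Pi$ is surjective at the central point $b_0=[X,\mathcal{L},s_0,s_1]$.

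First I would analyze this differential. Following \cite[Proposition 6.11]{Pi}, the cokernel of $d_{b_0}\Pi$ is identified with the space of \emph{periodic Jacobi fields} of the associated minimal immersion modulo those induced by ambient translations, equivalently with the kernel of the Schr\"{o}dinger operator $\Delta+\left|\nabla f\right|^2$ on $C^\infty(X)$ modulo the $3$-dimensional subspace spanned by the coordinate functions of the Gauss map $f\colon X\longrightarrow \mathbb{P}^1\cong S^2\subset\mathbb{R}^3$ defined by $\left|\Span(s_0,s_1)\right|$. The two hypotheses are precisely what makes this kernel computable: since $s_0$ and $s_1$ share no zero, $f$ is everywhere defined, and since $s_1$ vanishes at a single point, $f$ is totally ramified there, which places it in the extremal regime covered by Nayatani's \cite[Theorem 2]{Na} (compare \cite{MR}). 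That result forces the nullity of $\Delta+\left|\nabla f\right|^2$ to equal its minimal possible value, namely the $3$-dimensional space above; hence the quotient vanishes and $d_{b_0}\Pi$ is surjective.

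Granting this surjectivity, the conclusion is formal. By the Inverse Function Theorem $\Pi$ is a submersion on an open neighborhood of $b_0$, hence open there; moreover, since the maximal-rank locus $U$ is defined by the non-vanishing of minors of a real-analytic map and is non-empty (it contains $b_0$), its complement is a proper real-analytic subset and $U$ is open \emph{and} dense in $B$. Over $U$ the map $\Pi$ is open, so for every non-empty open $W\subset U$ the image $\Pi(W)$ is open and meets the dense set $G_\mathbb{Q}$; therefore $B(\mathbb{Q})\cap U=\Pi^{-1}\left(G_\mathbb{Q}\right)\cap U$ is dense in $U$. As $U$ is dense in $B$, taking closures yields that $B(\mathbb{Q})$ is dense in all of $B$.

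The main obstacle is the middle step: the precise identification of $\mathrm{coker}\,d_{b_0}\Pi$ with the reduced kernel of the Schr\"{o}dinger operator, together with the verification that the normalizations ``$s_0,s_1$ have no common zero'' and ``$s_1$ vanishes at a single point'' place $f$ exactly within the hypotheses of \cite[Theorem 2]{Na}. This is the genuinely analytic content of the argument and is supplied by \cite{Pi,MR,Na}; once the nullity is pinned to its minimum, the numerical gap $6g>6g-9$ and the density of $G_\mathbb{Q}$ in $G_\mathbb{R}$ do the rest.
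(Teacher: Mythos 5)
Your proposal is correct and follows essentially the same route as the paper, which itself only sketches this argument and defers the substance to \cite[Propositions 6.11, 6.12 and Theorem 6.14]{Pi}: the real-analytic period map $\Pi\colon B\longrightarrow G_\mathbb{R}$ with $B(\mathbb{Q})=\Pi^{-1}(G_\mathbb{Q})$, the identification of the obstruction to surjectivity of $d\Pi$ with the kernel of $\Delta+\left|\nabla f\right|^2$ modulo the coordinate functions of $f$, Nayatani's \cite[Theorem 2]{Na} pinning that nullity to $3$ because $s_1$ vanishes at a single point (so $f$ has a totally ramified value), and then openness of the resulting submersion plus density of $G_\mathbb{Q}$ in $G_\mathbb{R}$. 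Your additional remark that the maximal-rank locus is the complement of a proper real-analytic subset, upgrading local density near $b_0$ to density in all of $B$, is exactly the kind of globalization the cited results of \cite{Pi} carry out.
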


We can now prove Theorems \ref{theorem DENSITY} and \ref{theorem MINIMAL SURFACES}.

\begin{proof}[Proof of Theorem \ref{theorem DENSITY}]
The proof of assertions (i) and (iii) is actually included in \cite[Section 6]{Pi}.
Then we assume $g\geq 6$ and we follow the very same argument to prove assertion (ii), which states that the locus $\mathcal{R}_g^{\textrm{odd}}\subset \mathcal{M}_g$ parameterizing odd periodic surfaces is dense in the locus $\mathcal{M}_g^2$ of surfaces admitting an odd theta-characteristic with at least three global sections.
By Corollary \ref{corollary r=2 and 3}, the locus $\mathcal{G}^2_g\subset \mathcal{M}_{g,1}$ is non-empty, and its general point $[X,p]$ is such that $h^0(X,(g-1)p)=3$ and the linear series $|(g-1)p|$ is base-point-free.
Setting $\mathcal{L}:=\mathcal{O}_X\left((g-1)p\right)$, there exist two sections ${s_0,s_1\in H^0\left(X,\mathcal{L}\right)}$ without common zeroes, and we can assume that $s_1$ vanishes only at $p$ by definition of subcanonical point.
Then we consider a complete versal simply connected deformation space $B$ of $(X,\mathcal{L}, s_0,s_1)$ in $\mathcal{P}_g$.
We recall that any irreducible component of the locus of spin curves $\mathcal{S}^2_g$ has (complex) dimension $3g-6$ (see e.g. \cite{Te,Fa2}).
Moreover, any such a component passing through $[X,\mathcal{L}]$ has general point $[C,\xi]$ satisfying $h^0(C,\xi)=3$.
By completeness, $\mu(B)$ contains an irreducible component of $\mathcal{P}_g$.
Hence the real dimension of $B$ is $\dim_{\mathbb{R}}(B)=2\left(3g-6+2h^0(C,\xi)\right)=6g$.
Thus Proposition \ref{proposition DENSITY} assures that $B(\mathbb{Q})$ is dense in $B$.
By taking images under the modular map $\nu\colon B\longrightarrow \mathcal{M}_g$, we then deduce that $\mathcal{R}_g^{\textrm{odd}}$ is dense in $\mathcal{M}_g^{2}$.
\end{proof}

\begin{proof}[Proof of Theorem \ref{theorem MINIMAL SURFACES}]
The assertions on even and hyperelliptic minimal surfaces are included in \cite[Theorem 2]{Pi}.
Moreover, the proof of Theorem \ref{theorem DENSITY} assures that for any $g\geq 6$, there exists a point $[X,\mathcal{L},s_0,s_1]\in \mathcal{P}_g$ satisfying $h^0(X,\mathcal{L})=3$ and the hypothesis of Proposition \ref{proposition DENSITY}.
Thus the assertion of Theorem \ref{theorem MINIMAL SURFACES} for odd minimal surfaces follows from \cite[Section 6.19]{Pi}.
\end{proof}

\section*{Acknowledgements}
We are grateful to Gavril Farkas and Gabriele Mondello for helpful discussions.

\end{document}